\documentclass[12pt]{article}

\usepackage{amsthm}
\usepackage{amsmath}
\usepackage{amssymb}
\usepackage{comment}
\usepackage{graphicx}
\usepackage{xr}
\usepackage{framed}
\usepackage{color}
 \usepackage[titletoc,toc,title]{appendix}

\usepackage{thmtools}
\usepackage{thm-restate}

\usepackage{hyperref}

\usepackage{cleveref}
\usepackage[T1]{fontenc}
\usepackage[utf8]{inputenc}
\usepackage{authblk}

\usepackage{enumitem}

\usepackage{subcaption}

\usepackage[linesnumbered,ruled,vlined]{algorithm2e}

\usepackage{lineno}

\usepackage[nottoc]{tocbibind}

\usepackage{csquotes}

\title{Extending Drawings of Graphs to Arrangements of Pseudolines}
\author[1]{Alan Arroyo\thanks{Supported by CONACYT}}
\author[2]{Julien Bensmail \thanks{ERC Advanced Grant GRACOL, project no. 320812}}

\author[1]{R. Bruce Richter\thanks{Supported by NSERC}}

\affil[1]{Department of Combinatorics and Optimization, University of Waterloo, Canada}
\affil[2]{Université Côte d’Azur, CNRS, Inria, I3S, France.
}



\hyphenation{Tho-mas-sen}

\theoremstyle{definition}
\theoremstyle{notation}

\newtheorem{theorem}{Theorem}[section]
\newtheorem{lemma}[theorem]{Lemma}
\newtheorem{corollary}[theorem]{Corollary}

\newtheorem{claim}{Claim}


\newtheorem*{usefulfact}{Useful Fact}
\newtheorem*{case}{Case}
\newtheorem{observation}[theorem]{Observation}

\newtheorem{question}{Question}
\newtheorem*{disentangling_step}{Disentangling Step}
\newtheorem*{face_escaping}{Face-Escaping Step}
\newtheorem*{exterior_meeting}{Exterior-Meeting Step}



\newcommand{\junesix}[1]{{\color{black}#1}}
\newcommand{\junefif}[1]{{\color{black}#1}}

\newcommand{\septtwotwo}[1]{{\color{black}#1}}
\newcommand{\octtwofour}[1]
{{\color{black}#1}}
\newcommand{\feba}[1]
{{\color{black}#1}} 
\newcommand{\marchtwo}[1]
{{\color{black}#1}}  
\newcommand{\rev}[1]
{{\color{black}#1}}  
\newcommand{\ther}[1]
{{\color{black}#1}}  

\newcommand{\dise}[1]
{{\color{black}#1}}  
\newcommand{\disen}[1]
{{\color{black}#1}}  
\newcommand{\disent}[1]
{{\color{black}#1}}  
\newcommand{\disenta}[1]
{{\color{black}#1}}  

\newcommand{\apriloneseven}[1]
{{\color{black}#1}}  

\newcommand{\apriloneeight}[1]
{{\color{black}#1}}  

\begin{document}
\maketitle

\begin{abstract}
A {\em pseudoline} is a homeomorphic image of the real line in the plane so that its complement is disconnected. An {\em arrangement of pseudolines} is a set of pseudolines in which every two cross exactly once. A drawing of a graph is {\em pseudolinear} if the edges can be extended to an arrangement of pseudolines.
In the recent study of crossing numbers, pseudolinear drawings have played an important role as they are a natural combinatorial extension of rectilinear drawings. 
A characterization of the pseudolinear drawings of $K_n$ was found recently. We 
extend this characterization to all graphs, by describing the set of minimal forbidden subdrawings for pseudolinear drawings. 
Our characterization also leads to a polynomial-time algorithm to recognize pseudolinear drawings and construct the pseudolines when it is possible. 
\end{abstract}

\section{Introduction}

A {\em pseudoline\/} is an unbounded open arc in the plane whose complement
is disconnected. In particular, lines are pseudolines, and any pseudoline
is the image of a line under a homeomorphism of the plane into itself. An
{\em arrangement of pseudolines} is a set of pseudolines in which every two intersect
in exactly one point, and their intersection point is a crossing.  A drawing of a graph $G$ is {\em pseudolinear\/} if there is an arrangement of pseudolines consisting of a different pseudoline for each edge and each edge is contained in its pseudoline.

In this work we characterize pseudolinearity of a drawing of any graph, not just $K_n$: the drawing must be good \apriloneeight{(defined below)} and not contain any of the configurations in Figure \ref{obstructions}.  
Thomassen \cite{T88} already observed that many of the drawings in Figure \ref{obstructions} 
are
obstructions \apriloneeight{for a drawing to be homeomorphic to a rectilinear drawing;} they are also obstructions for pseudolinearity. 

  \begin{figure}[ht]

    \centering
    
    \includegraphics[scale=0.3]{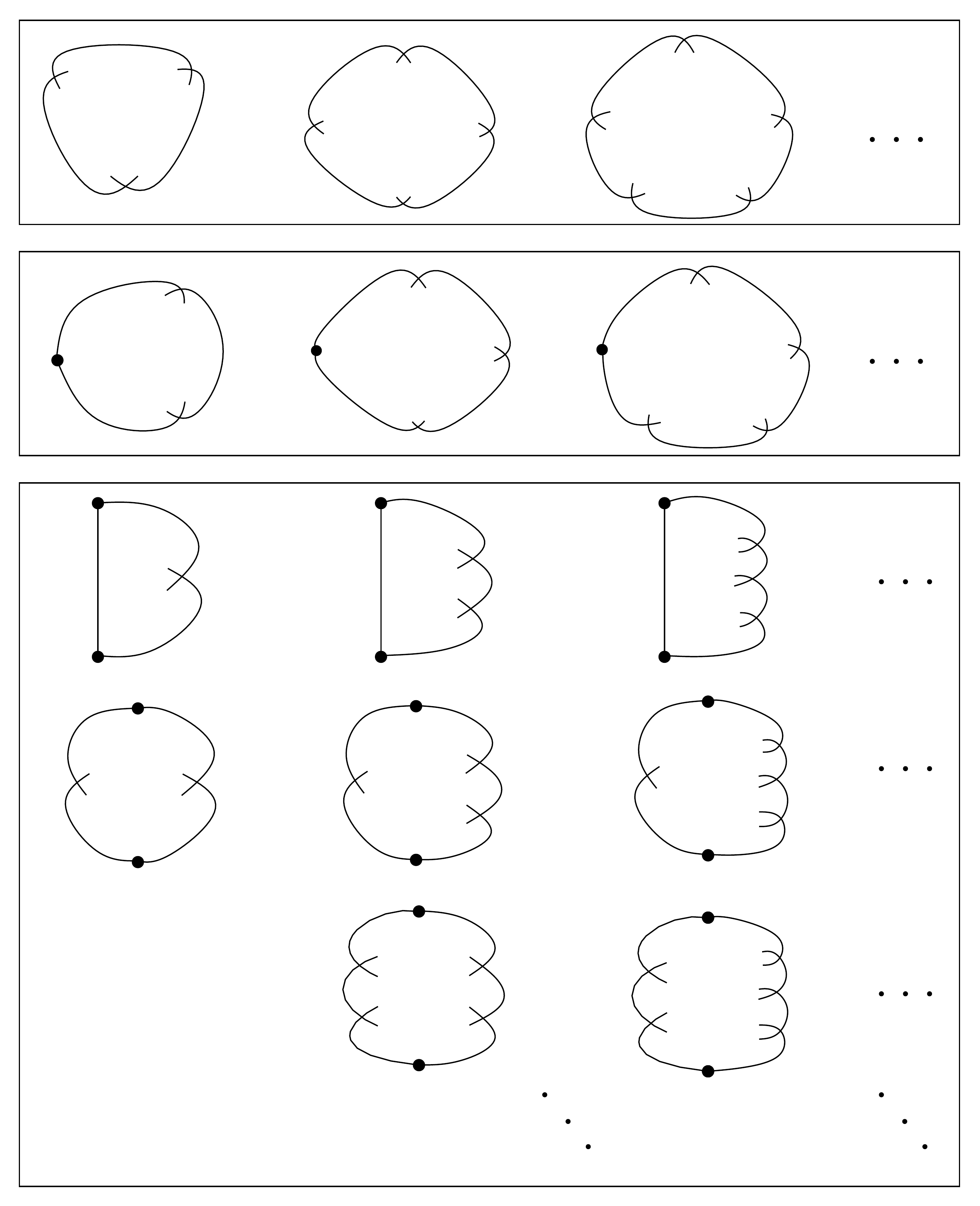}
    \caption{Obstructions to pseudolinear drawings.   }
 	\label{obstructions}
\end{figure}

We have been unable to find any literature that suggests even the possibility of a characterization of pseudolinearity. Moreover, informal conversations with colleagues seemed to be more along the lines of finding more obstructions.

A rectilinear drawing of a graph is one in which edges are drawn using straight line segments, and more generally, a stretchable drawing is one that is homeomorphic to a rectilinear drawing. F\'ary's Theorem \cite{F48,S51,W36}, a classic result in graph theory, asserts that drawings of simple graphs with no crossings between edges are stretchable.


In \cite{T88}, Thomassen extended F\'ary's Theorem  by characterizing stretchable drawings of graphs in which every edge is crossed at most once: 
In addition to being a {\em good} drawing (that is, no edge self-intersects and no two edges have two points -- either crossings or common endpoints -- in common), there are two forbidden configurations, shown in Figure \ref{B_and_W}.

\begin{figure}[ht]

    \centering
    
    \includegraphics[scale=0.4]{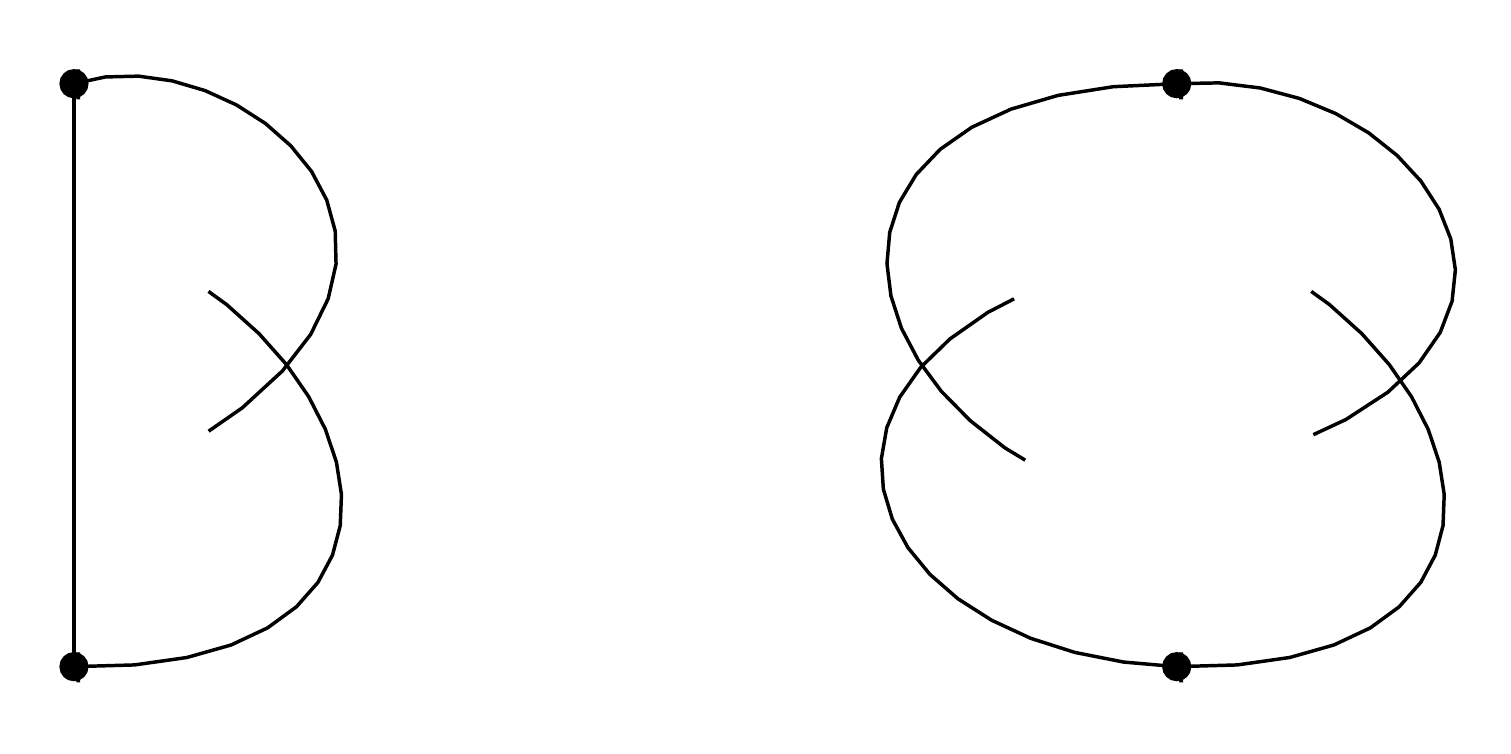}
    \caption{$B$ and $W$ configurations.}
	 	\label{B_and_W}
\end{figure}

Thomassen's characterization is a partial answer to the general problem
of determining which drawings are stretchable. There is not likely to be a
complete characterization, as Mn\"ev \cite{M85,M88} showed that the closely related problem of stretchability of arrangements of pseudolines is NP-hard (in fact $\exists\mathbb R$-hard). This easily implies that stretchability of graph drawings is NP-hard.


\apriloneseven{
The study of arrangements of pseudolines  was initiated by Levi and Ringel \cite{Levi1926,Ringel1956}, and propagated by  Gr\"unbaum's popular monograph {\em Arrangements and spreads} \cite{G72}.
}

Arrangements of pseudolines have played an important role in the study of the crossing number of $K_n$. A drawing of a graph $G$ is {\em pseudolinear\/} if there is an arrangement of pseudolines consisting of a different pseudoline for each edge and each edge is contained in its pseudoline. The original, independent proofs by \'Abrego and Fern\'andez-Merchant \cite{AF05} and by Lov\'asz et al \cite{LVWW04} that a rectilinear drawing of $K_n$ has at least 
\[
\frac 14\left\lfloor \frac{\mathstrut n}{\mathstrut 2}\right\rfloor 
\left\lfloor \frac{\mathstrut n-1}{\mathstrut 2}\right\rfloor
\left\lfloor \frac{\mathstrut n-2}{\mathstrut 2}\right\rfloor
\left\lfloor \frac{\mathstrut n-3}{\mathstrut 2}\right\rfloor
\]
crossings in fact applies to pseudolinear drawings of $K_n$. The substantial progress on computing the rectilinear crossing number of $K_n$ has continued this approach and has lead to further study of pseudolinear drawings \cite{BLPRS07,HLS17,AMRS15,AHPSV15,Cardinal2016}. \apriloneseven{Since the pseudolinear obstructions are also rectilinear obstructions, we wonder if this work might shed light on rectilinear drawings of graphs.  For example, Thomassen characterizes when a drawing of a graph in which each edge has at most one crossing is homeomorphic to a rectilinear drawing.  Our result shows that this is if and only if the drawing is pseudolinear.  (Pseudolinearity is obviously necessary; that it is sufficient is a little surprising.)} 

There have been recent, independent characterizations of pseudolinear drawings of $K_n$ \cite{AMRS15,AHPSV15}. The simpler of the equivalent descriptions is that the drawing is good and that it does not contain
the unique (up to homeomorphism) \apriloneeight{good} drawing of $K_4$ having the crossing incident with the infinite face. 



\apriloneeight{Our main} theorem is best presented in the context of strings in the plane. 
 A {\em string} $\sigma$ is the image $f([0,1])$ of  a continuous function  $f:[0,1]\rightarrow \mathbb{R}^2$ that restricted to \feba{$(0,1)$} is injective; in other words, strings are arcs that are  allowed  to self-intersect only at their {\em ends} $f(0)$ and $f(1)$. If no such self-intersection exists, then $\sigma$ is {\em simple}.  Most of the time we will  consider simple strings, although  considering  non-simple strings will come 
\junesix{
in}
handy for technical reasons.

A set of strings $\Sigma$ is in {\em general position} if, 
 for every two strings $\sigma$, $\sigma'\in \Sigma$ (i) $\sigma\cap\sigma'$ is a finite set of points in $\mathbb{R}^2$; and (ii) each point in $\sigma\cap \sigma'$ is either a crossing between $\sigma$ and $\sigma'$, or an end of either $\sigma$ or $\sigma'$. For instance, the set of edge-arcs of a good drawing of a graph is a set of strings in general position, but not all the sets of strings in general position come in this fashion: a string might  include  end points of other strings in its interior.

For a set $\Sigma$ of strings in general position, its {\em underlying plane graph} $G(\Sigma)$ is the plane graph obtained from $\Sigma$ by replacing  the  crossings between strings and the end points of every string in $\Sigma$ by vertices.  Our   main result
\junesix{
below}
characterizes when
\junesix{
a}
set of strings in general position  can be extended to an arrangement of pseudolines. 

\begin{theorem}\label{MAIN}
A set of strings $\Sigma$ in general position  can be extended to an arrangement of pseudolines if and only if, for each cycle $C$ in the underlying plane graph $G(\Sigma)$ of $\Sigma$, there are at least three vertices with the property that the edges incident to the vertex that are  included 
\junesix{in the closed}
disk bounded by $C$ belong to distinct strings in \junefif{$\Sigma$}. 
\end{theorem}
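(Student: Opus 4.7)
The plan is to prove the two directions of the biconditional separately, treating necessity first as the simpler side. Suppose $\Sigma$ extends to an arrangement $\Lambda$, let $C$ be a cycle of $G(\Sigma)$ bounding a closed disk $D$, and call a vertex $v$ of $C$ a \emph{corner} if the edges of $G(\Sigma)$ incident to $v$ that lie in $D$ belong to pairwise distinct strings. For any pseudoline $\lambda\in\Lambda$ the intersection $\lambda\cap \overline D$ is a finite union of arcs with endpoints on $C$, and any two pseudolines of $\Lambda$ cross exactly once. Combining these two facts with the observation that at each non-corner vertex some pseudoline threads locally through (using two incident edges of the same string), I would show that if $C$ had at most two corners then two pseudolines of $\Lambda$ would be forced to meet twice inside $\overline D$, contradicting the single-crossing axiom; equivalently, one of the local forbidden configurations of Figure~\ref{obstructions} would materialize.

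For sufficiency, I would induct on the complexity of $\Sigma$, using $|\Sigma|$ as the primary measure and the number of crossings of $G(\Sigma)$ as a tiebreaker. The base case $|\Sigma|\le 1$ is immediate, since a single string can always be prolonged through the unbounded face into a pseudoline. In the inductive step, the goal is to pick a string $\sigma\in\Sigma$ whose removal preserves the three-corner condition on $\Sigma\setminus\{\sigma\}$, apply the inductive hypothesis to obtain an arrangement $\Lambda'$ extending $\Sigma\setminus\{\sigma\}$, and then re-insert $\sigma$ by prolonging it through the cells of $\Lambda'$ from each of its two endpoints out to infinity, making local choices at every face boundary so that the resulting pseudoline meets every member of $\Lambda'$ exactly once. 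Preserving the cycle condition under deletion will constrain which $\sigma$ is eligible, and I would expect this to be handled by a short parity or counting argument on each cycle that $\sigma$ meets.

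The main obstacle I anticipate is the extension step itself. A purely local, greedy routing of $\sigma$ through $\Lambda'$ can run into a global parity obstruction: an enclosing cycle of $G(\Sigma)$ may force a pseudoline of $\Lambda'$ to be crossed twice, or not at all. The three-corner hypothesis is precisely the mechanism that should break such deadlocks: whenever a re-crossing is threatened by an enclosing cycle $C$, one of the three corners of $C$ should supply a detour vertex through which $\sigma$ can be rerouted into an adjacent face without incurring a double crossing. Turning this heuristic into a proof will most likely require organising the rerouting as a sequence of controlled local modifications; the bulk of the work will lie in specifying those moves, verifying that each preserves the cycle condition, and showing that the overall procedure terminates, presumably together with an appeal to the $K_n$ characterisation already cited in the introduction to close out the final configuration.
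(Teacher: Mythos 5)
Your necessity sketch misidentifies where the contradiction comes from. The hypothesis concerns \emph{rainbow} vertices (all edges incident to the vertex lying in the closed disk belong to distinct strings), so a crossing at which the cycle turns but whose other two edges point into the disk is \emph{not} a rainbow; at such a vertex a string dives into the interior, and no double crossing inside the disk is forced by that alone --- the string could in principle leave the disk through a pseudoline it has not met yet. The paper's proof has to work for this: it chooses an obstruction $C$ minimizing $|\delta(C)|$ and proves (Observation \ref{obs:min_delta}, itself a nontrivial rainbow-counting argument) that the continuation of the string from a reflecting vertex of $\delta(C)$ meets $C$ only at that vertex, so an \emph{end} of the string is trapped inside the disk, contradicting that, after trimming the pseudolines, all ends lie on the outer face. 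Your claim that two pseudolines ``would be forced to meet twice inside $\overline D$'' is precisely the step that needs an argument and, as stated, is not the right invariant.

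For sufficiency your route (delete a string, extend the rest by induction, re-insert) differs genuinely from the paper's, but both of its pillars are unsupported and the second is essentially the whole theorem. First, deleting a string can \emph{create} an obstruction: a cycle with exactly three rainbows, two of which are crossings with $\sigma$ at which $\sigma$ passes outside the disk, loses those two rainbows when $\sigma$ is removed (the crossings are suppressed), so the existence of an eligible $\sigma$ is not a ``short parity argument'' and is not obviously true. Second, even granting an arrangement $\Lambda'$ extending $\Sigma\setminus\{\sigma\}$, the inductive hypothesis hands you one \emph{particular} arrangement, and $\sigma$ may be incompatible with it even though $\Sigma$ is extendible; the three-rainbow condition constrains $G(\Sigma)$, not the extra crossings $\Lambda'$ makes outside $\bigcup\Sigma$, so the ``detour at a corner'' heuristic has no leverage, and fixing the problem may require rerouting the other pseudolines, destroying the induction. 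The paper avoids exactly this trap by never committing to a full arrangement prematurely: it extends strings by small increments (Disentangling, Face-Escaping, Exterior-Meeting steps), maintaining ``no obstruction'' as the invariant, and the real technical content is that these micro-steps preserve the invariant (Lemmas \ref{2SIDES} and \ref{2SIDESEDGE}, proved by careful near-obstruction counting). Your plan has no analogue of these lemmas, no termination argument for the rerouting, and the closing appeal to the $K_n$ characterization is circular in spirit, since that result is derived from Theorem \ref{MAIN}, not a tool for proving it.
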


For instance, let  $C$ be the unique cycle in  the underlying plane  graph in any of the  drawings in Figure \ref{obstructions}. There are at most two vertices \ther{of $G$} in $C$,  represented as black dots. The strings incident with such a vertex are distinct and contained in the closed disk bounded by $C$. The vertices represented as crossings do not satisfy 
\junesix{this}
property: \septtwotwo{they are incident with four edges in the disk bounded by $C$, and}  these four edges consist of two strings that cross at this vertex.	
 Theorem \ref{MAIN} implies that none of the drawings in Figure \ref{obstructions} is pseudolinear. Surprisingly, we will show, as a consequence of Theorem \ref{MAIN}, that every non-pseudolinear drawing contains one of the configurations in Figure \ref{obstructions} as a subdrawing.

  \begin{restatable}{theorem}{minconf}
  \label{thm:min_conf}
  \junefif{Let $D$ be a non-pseudolinear good drawing of a graph $H$.}
  Then there is a subset $S$ of edge-arcs in $\{D[e]\;:\; e\in E(H)\}$, such that each $\sigma\in S$ has a substring $\sigma'\subseteq \sigma$ for which $\bigcup_{\sigma\in S} \sigma'$ is one of the drawings in Figure \ref{obstructions}.
\end{restatable}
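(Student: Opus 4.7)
The plan is to apply Theorem \ref{MAIN} directly to the set of edge-arcs $\Sigma = \{D[e] : e \in E(H)\}$ of $D$. Since $D$ is non-pseudolinear, $\Sigma$ cannot be extended to an arrangement of pseudolines, so Theorem \ref{MAIN} provides a cycle $C$ in the underlying plane graph $G(\Sigma)$ with at most two vertices satisfying the ``distinct-string'' property. Call such vertices \emph{good} and the rest of the vertices on $C$ \emph{bad}; let $\Delta$ denote the closed disk bounded by $C$.

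Among all witnesses to non-pseudolinearity produced this way, I would choose $C$ to be minimal, say by minimizing the total number of edge-arc pieces of $G(\Sigma)$ contained in $\Delta$, with the length of $C$ as a tiebreaker. This minimality will be the technical engine for ruling out any unwanted clutter inside $\Delta$: no proper sub-region of $\Delta$ can itself violate the distinct-string condition, because doing so would produce a strictly smaller witness cycle, contradicting the choice of $C$.

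I then analyse each bad vertex $v$ on $C$. A graph-vertex of $H$ lying on $C$ is automatically good, since all edge-arcs incident with it belong to distinct strings, so every bad $v$ must be a crossing of two strings. Locally, $v$ has four incident edge-arcs, two of which lie on $C$; failure of the distinct-string property at $v$ forces an additional edge-arc — a continuation of one of the two strings meeting at $v$ — to enter the interior of $\Delta$, or else the two $C$-edges at $v$ already belong to a single string. I would call such an offender the \emph{pendant} half-edge at $v$, and trace it across $\Delta$.

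The main obstacle is to show that every pendant half-edge terminates at another vertex on $C$, and that no two pendants interact with each other inside $\Delta$ except in ways already depicted in Figure \ref{obstructions}. Minimality of $C$ rules out the alternatives: any internal crossing between two pendants, or any internal graph-vertex, would allow us to carve a strictly smaller bad cycle out of $C$ and parts of the pendants, contradicting the choice of $C$. Once this is established, trimming each involved $D[e]$ to the substring $\sigma'$ consisting only of the portion near $C$ together with its pendant extension yields a union whose underlying plane graph is, up to homeomorphism, precisely one of the configurations in Figure \ref{obstructions}. A short case analysis on the number of good vertices ($0$, $1$, or $2$) and on how the pendants pair up inside $\Delta$ then identifies which configuration arises and completes the proof.
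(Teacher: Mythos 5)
The first half of your plan (invoke Theorem \ref{MAIN}, pick a minimal witness cycle $C$, classify its vertices into rainbows and reflecting crossings, and finish by taking substrings) is the same skeleton as the paper's proof, but the structural claims you make about the interior of $\Delta$ are false, and they are precisely where your argument would break. The configurations of Figure \ref{obstructions} do not contain pendants that traverse the disk and land on $C$ again: at each reflecting crossing the two strings merely continue a short distance into the interior and stop. In fact, when $C$ is an obstruction chosen to minimize $|\delta(C)|$, the continuation of a string past a point of $\delta(C)$ can never meet $C$ again at all (this is Observation \ref{obs:min_delta}), and it terminates at a vertex of $H$ strictly inside the disk --- this is exactly how a $B$ configuration sits inside a drawing, with the two free ends in the interior. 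So your ``main obstacle'', proving that every pendant half-edge terminates at another vertex on $C$, is a false statement; and your claim that minimality rules out internal graph-vertices or internal crossings among pendants is also false, since every reflecting crossing of $C$ forces a vertex of $H$ inside $\Delta$, and the continuations may cross many other edges there. None of this interior structure needs to be controlled: because the theorem only asks for substrings, one truncates each continuation to a small initial stub and ignores everything deeper inside $\Delta$.

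Two further ingredients are missing from your write-up and are what actually make the argument work. First, the minimality measure: the paper minimizes $|\delta(C)|$, not the amount of material inside $\Delta$; your measure does not yield the non-returning property for excursions of a string through the exterior of $C$, because the outer cycle produced in that case bounds a \emph{larger} disk (compare the second case in the proof of Observation \ref{obs:min_delta}), so no contradiction with your choice of $C$ arises. Second, you never argue that the arcs of $C$ between consecutive points of $\delta(C)$ lie in pairwise \emph{distinct} edge-arcs; this again follows from Observation \ref{obs:min_delta} and is what allows you to pick one substring per edge-arc, as the statement of Theorem \ref{thm:min_conf} requires --- without it, two sides of your configuration could be pieces of a single edge-arc. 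With these two facts the proof is short: decompose $C$ into paths $P_0,\dots,P_m$ between consecutive points of $\delta(C)$, extend each $P_i$ slightly along its edge-arc into the interior at each reflecting end, and observe that, since $C$ has at most two rainbows, the union of these extended substrings is one of the drawings of Figure \ref{obstructions}, the rainbows playing the role of the black dots. Your concluding case analysis on how pendants ``pair up inside $\Delta$'' is neither needed nor carried out correctly as described.
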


Cycles that  have fewer than three vertices as in Theorem 1 are  the {\em obstructions} of $G(\Sigma)$  \ther{(this definition will be made more precise at the beginning of Section \ref{sec:main})}.
Showing that when $G(\Sigma)$ has obstructions, then $\Sigma$ cannot be extended to an arrangement of pseudolines,  is the first part of Section \ref{sec:main}. The rest of Section \ref{sec:main} is devoted to show that if $G(\Sigma)$ has no obstructions, then $\Sigma$ can be extended to an arrangement of pseudolines. The proof of two technical lemmas used in the proof of Theorem \ref{MAIN} are deferred to Section \ref{sec:teclemmas}. In Section \ref{sec:find_obs}, we describe a simple algorithm that finds an obstruction in polynomial time. In Section \ref{sec:Kn}, by applying Theorem \ref{MAIN}, we prove that a drawing
\junesix{of a}
complete graph $K_n$ is pseudolinear if and only if it does not contain the $B$ configuration in Figure \ref{B_and_W}. This result is equivalent to the characterizations of pseudolinear drawings of $K_n$ given  in \cite{AHPSV15} and \cite{AMRS15}, but its proof is  simpler. At the end, in Section \ref{sec:conclusions}, we show how Theorem \ref{thm:min_conf} easily follows from \marchtwo{Theorem} \ref{MAIN}, together with some concluding remarks.

\section{Proof of Theorem \ref{MAIN}}
\label{sec:main}

In this section, we use Lemmas \ref{2SIDES} and \ref{2SIDESEDGE} (proved in the next section)
to prove Theorem \ref{MAIN}.
As we enter  into the subject, we need some  notation that is useful in identifying an obstruction.
Let $C$ be a cycle of  a plane graph $G$ and let $v$ be a vertex of $C$. 
The  {\em  rotation at $v$ inside $C$}  is the counterclockwise ordered list $e_0,e_1...,e_k$ of edges incident with $v$ that are included in the closed disk bounded by $C$, with $e_0$ and $e_k$ both in $C$.  Likewise, the {\em rotation at $v$ outside $C$} is defined as  the counterclockwise ordered list $e_k,$ \septtwotwo{$e_{k+1},\dots,e_0$} of edges incident with $v$ included in the closure of the exterior of $C$.

\begin{figure}[h]
	\centering
	\begin{subfigure}[b]{0.45\textwidth}
	\centering
		\includegraphics[scale=0.5]{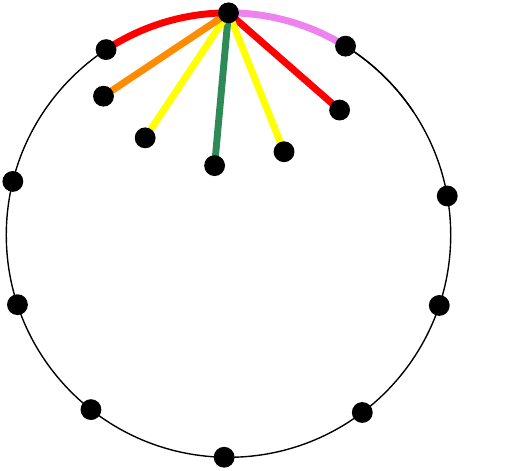}
		\caption{A reflecting vertex}
		\label{subfig:reflecting}
		\end{subfigure}
		~
	\begin{subfigure}[b]{0.45\textwidth}
		\centering
		\includegraphics[scale=0.5]{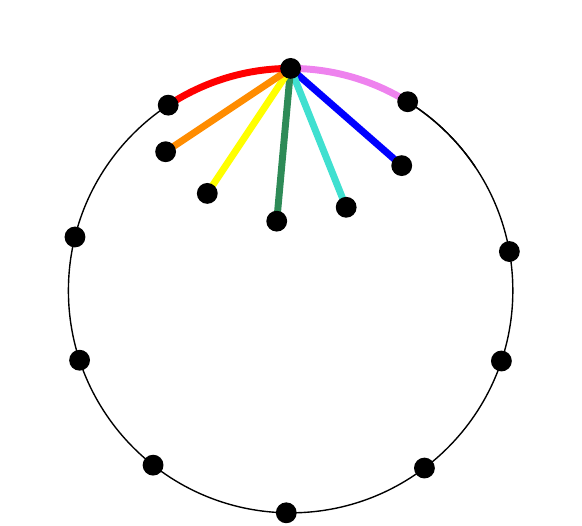}
		\caption{A rainbow}
		\label{subfig:rainbow}
		\end{subfigure}
		
		\caption{A representation of reflecting and rainbow vertices, where  each  string in $G(\Sigma)$  has assigned  a unique colour.}
\end{figure}

\junesix{
In the case $G=G(\Sigma)$ for some set $\Sigma$  of
strings in general position, a vertex $v$ in a cycle $C$ of $G(\Sigma)$ is {\em reflecting} in $C$ if at least two
edges in the rotation \septtwotwo{at} $v$ inside $C$ belong to the same string \ther{(Figure \ref{subfig:reflecting})}. The alternative is that $v$ is a {\em rainbow}, in which case all
the edges of its rotation inside $C$ are in different strings \ther{(Figure \ref{subfig:rainbow})}. In these terms, an {\em obstruction}
is a cycle with at most two rainbows.
}

\subsection{Sets of strings with obstructions are not extendible}
\label{subsec:obstructions_not_extendible}

The following observation will be used in this subsection and also in Theorem \ref{thm:Kn}. If $C$ is a cycle in $G(\Sigma)$, where
 $\Sigma$ is a set of strings in general position, then  $\delta(C)$ is the set of vertices in $C$ for which their two incident edges in $C$ belong to two distinct strings in $\Sigma$. Note that if $|\delta(C)|<3$ for some cycle $C$, then either \septtwotwo{$|\delta(C)|=2$} and two strings intersect more than once, or \septtwotwo{$|\delta(C)|\leq1$} and some string is self-crossed. \junefif{ Both these possibilities are forbidden in good drawings. }

\begin{observation}\label{obs:min_delta}

Let $\Sigma$ be a set of simple strings in general position in which every two strings intersect at most once. \feba{Let:
\begin{itemize}
\item[(a)] $C$ be an obstruction of $G(\Sigma)$ for which $|\delta(C)|$ is as small as possible;
\item[(b)] $x\in\delta(C)$;
\item[(c)] $e$ be an edge in $C$  incident to $x$;
\item[(d)] $\sigma\in \Sigma$ be the string  containing $e$; and
\item[(e)] $\sigma'$ be the component of $\sigma\setminus e$ containing $x$.
\end{itemize}
}
 Then $\sigma'\cap C=\{x\}$.
\end{observation}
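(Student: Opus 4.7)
The plan is a contradiction argument against the minimality in (a): assuming $\sigma' \cap C \supsetneq \{x\}$, I will construct a cycle $C_1$ which is again an obstruction of $G(\Sigma)$ but satisfies $|\delta(C_1)| < |\delta(C)|$.

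Write the vertices of $G(\Sigma)$ along $\sigma$ as $v_0,\dots,v_k$ with $x = v_{j-1}$ and $e = v_{j-1}v_j$, so $\sigma' = v_0 v_1 \cdots v_{j-1}$. Because two distinct edges of $G(\Sigma)$ can meet only at shared vertices, an extra intersection of $\sigma'$ with $C$ forces some $v_i \in C$ with $i < j-1$; I take $y := v_i$ with $i$ as large as possible and let $P := v_i v_{i+1} \cdots v_{j-1}$, so $P \subseteq \sigma$ and $P \cap C = \{x, y\}$. Split $C$ into the two internally disjoint $x$-to-$y$ paths $Q_1 \ni e$ and $Q_2$, and set $C_1 := Q_1 \cup P$ and $C_2 := Q_2 \cup P$.

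The first step is to show $|\delta(C)| \geq 3$ via two string-propagation arguments. \emph{(i)} If no interior vertex of $Q_1$ lies in $\delta(C)$, the edge-strings along $Q_1$ never change from $\sigma$, so $Q_1 \subseteq \sigma$; combined with $P \subseteq \sigma$ and the simplicity of $\sigma$, this forces $Q_1 = P$, contradicting $P \cap C = \{x, y\}$ versus $Q_1 \subseteq C$. \emph{(ii)} Letting $\sigma_x \neq \sigma$ denote the string of the other $C$-edge $e'$ at $x$: if no interior vertex of $Q_2$ lies in $\delta(C)$, analogous propagation gives $Q_2 \subseteq \sigma_x$ and in particular $y \in \sigma_x$; but $y$ already lies in $\sigma$, so together with $x$ the strings $\sigma$ and $\sigma_x$ would have two intersection points, violating the standing hypothesis of the observation. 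Together with $x \in \delta(C)$, this yields $|\delta(C)| \geq 3$.

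A direct count then yields $|\delta(C_1)| \leq |\delta(C)| - 1$: at $x$ and at every interior vertex of $P$ the two $C_1$-edges lie in $\sigma$, contributing $0$ to $\delta(C_1)$; each interior vertex of $Q_1$ contributes the same to $\delta(C_1)$ as to $\delta(C)$; and $y$ contributes at most $1$ to $\delta(C_1)$. Since $x$ and at least one interior vertex of $Q_2$ are members of $\delta(C)$ but neither appears in $\delta(C_1)$, the strict drop follows.

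The main obstacle is to verify that $C_1$ itself is an obstruction of $G(\Sigma)$. Because $x$ and every interior vertex of $P$ are reflecting in $C_1$ (two $\sigma$-edges in the local rotation), the rainbows of $C_1$ can appear only at interior vertices of $Q_1$ or at $y$. A case analysis depending on whether $P$ is contained in the closed disk $D_C$ or in the closure of its complement controls how the rotation inside $C_1$ at each interior vertex of $Q_1$ compares with the rotation inside $C$: when $D_{C_1} \subseteq D_C$ the former is a sublist of the latter, and when $D_{C_1} \supseteq D_C$ it is a superlist. Together with the bound of at most two rainbows in $C$ (and, where necessary, switching to the symmetric cycle $C_2$), this keeps the rainbow count of the chosen smaller cycle at or below $2$. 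Once this delicate rainbow bookkeeping is done the contradiction with the minimality of $|\delta(C)|$ is immediate, and we conclude $\sigma' \cap C = \{x\}$.
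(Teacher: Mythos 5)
Your setup and the first two steps are sound and in fact mirror the paper's own argument: the choice of $y$ and $P$ with $P\cap C=\{x,y\}$, the propagation arguments showing that each of the interiors of $Q_1$ and $Q_2$ contains a vertex of $\delta(C)$ (the second one using the hypothesis that two strings meet at most once, exactly as the paper does), and the count $|\delta(C_1)|\le|\delta(C)|-1$ are all correct. The difficulty is concentrated where you concentrate it, in the claim that $C_1$ is an obstruction, and there your sketch has a genuine gap rather than a merely ``delicate'' bookkeeping issue. First, the dichotomy ``$D_{C_1}\subseteq D_C$ or $D_{C_1}\supseteq D_C$'' is not exhaustive: when $P$ lies in the exterior of $C$, only one of the two cycles $Q_1\cup P$, $Q_2\cup P$ bounds a disk containing $D_C$ (namely the outer boundary of $C\cup P$), and it need not be the one through $e$. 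If the outer boundary is $Q_2\cup P$, then your $C_1$ is the ``lens'' between $C$ and $P$: at an interior vertex of $Q_1$ the rotations inside $C_1$ and inside $C$ share only the two cycle edges, neither is a sublist of the other, and no bound on the rainbows of $C_1$ in terms of those of $C$ is available. The only cycle whose rainbows you can control there is the outer one, which does not contain $e$.

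Second, whenever you must ``switch to the symmetric cycle $C_2$'' (in the interior case when $C_1$ acquires a third rainbow at $y$, or in the exterior configuration just described), the contradiction with minimality is not immediate, because your strict drop in $\delta$ was proved only for $C_1$ and used precisely that both $C_1$-edges at $x$ lie in $\sigma$. For $C_2$ this fails: its edges at $x$ lie in $\sigma$ and $\sigma_x$, so $x\in\delta(C_2)$, and moreover $y$ can lie in $\delta(C_2)\setminus\delta(C)$; the same counting then gives only $|\delta(C_2)|\le|\delta(C)|$, and equality is consistent with everything you have established. Closing this requires a further argument, e.g.\ that in the interior case the two rainbows of $C$ forcing the switch lie on the interior of $Q_1$ and hence in $\delta(C)$ (so $|\delta(C)\cap \mathrm{int}\,Q_1|\ge 2$), and in the exterior case that $|\delta(C)\cap \mathrm{int}\,Q_1|=1$ would force, by propagation along $Q_1$ and the at-most-one-intersection hypothesis applied to $\sigma$ and the string reaching $y$, that $y\in\delta(C)$. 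Note that the paper avoids both problems: in the interior case it never exhibits a smaller obstruction, but instead uses minimality to conclude that neither $Q_1\cup P$ nor $Q_2\cup P$ is an obstruction and then derives a contradiction by showing $Q_2\cup P$ has at most two rainbows; in the exterior case it works with the outer cycle of $C\cup P$ (not necessarily the cycle through $e$), whose vertices along $P$, including $x$, are reflecting because the rotation inside a cycle also contains the cycle's own edges. As it stands, your plan would fail exactly in the exterior ``lens'' configuration and, after any switch to $C_2$, lacks the strict decrease in $|\delta|$ that the minimality contradiction needs.
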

\begin{proof}
By  way of contradiction, suppose that $\sigma'\cap C$ includes a point distinct from $x$. This in particular  implies that  $\sigma'\neq \{x\}$, and, because $x\in \delta(C)$, the points  of $\sigma'\setminus\{x\}$ near $x$ are not in $C$. Let $P$ be the path in $G(\Sigma)$ obtained by traversing $\sigma'$, starting at $x$, and stopping the first time we encounter a point $y\in C\cap(\sigma'\setminus\{x\})$. Note that $y\in V(C)$ and that $P$ is drawn in either  the interior or the exterior of $C$.

 First, suppose that $P$ is drawn in the interior of $C$. Let $C_1$ and $C_2$ be the cycles obtained from the union of $P$ and one of the two $xy$-subpaths in $C$. We may assume $C_1$ includes $e$. 
 \junefif{Each of $C_1-P$ and $C_2-P$ has a vertex  in $\delta(C)$; otherwise one of $C_1$ or $C_2$ would be  included in at most two strings, implying that a string is self-crossing or two strings intersect twice. Therefore }
 $|\delta(C_1)|$ and $|\delta(C_2)|$ are strictly smaller than $|\delta(C)|$.
 \junefif{Then, by assumption,}
 $C_1$ and $C_2$  are not obstructions.
 
 None of the vertices in $P-y$ is a rainbow for $C_1$ (\feba{$P\subseteq \sigma'$ and $x$ is reflecting in $C_1$}, so \feba{ the interior rotations of the vertices in $P-y$} include two edges in $\sigma$). Since all the vertices in $C_1-V(P)$ that are rainbow in $C_1$ are also rainbow in $C$, $C_1$ has at most two rainbows in  $V(C_1)\setminus V(P)$. These last two observations and the fact that $C_1$ is not an obstruction, together imply that $C_1$ has three 
 rainbows\junefif{:}
 two of them
 \junefif{are}
 in $V(C_1)\setminus V(P)$ and the  other is $y$. 

Now we look at the rainbows in $C_2$. Because $C$ has two rainbows in $C_1-V(P)$ and any rainbow  in $V(C_2)\setminus V(P)$ for $C_2$ is rainbow for $C$,   $C_2$ has no rainbow   in $V(C_2)\setminus V(P)$. All the interior vertices of $P$ are reflecting in $C_2$, so $C_2$ has at most two rainbows. This contradicts that $C_2$ is not an obstruction.

Secondly, suppose that $P$ is drawn in the exterior of $C$. Let $C_{out}$ be the cycle bounding the outer face of $C\cup P$. The cycle $C_{out}$ is the union of $P$ and one of the two $xy$-paths in $C$, and, in both cases, \feba{as $x\in \delta(C)\setminus \delta(C_{out})$ and $P-y\subset \sigma$,} $|\delta(C_{out})|<|\delta(C)|$ . Every vertex in $P-y$ is reflecting in $C_{out}$ \ther{(this statement follows from the fact that the rotation of a vertex inside a cycle also includes the edges
of the cycle incident with the vertex)}. Moreover, every vertex in $V(C_{out})\setminus(V(P-y))$ that is a rainbow in $C_{out}$  is also a rainbow in $C$. These two facts  imply that $C_{out}$ has at most as many rainbows as $C$; hence $C_{out}$ is an obstruction. This contradicts	the fact that $C$ minimizes $|\delta|$. 
\end{proof}

Next we show that, if a set of strings contains an obstruction, then it is not pseudolinear.

\begin{observation}\label{obstruction_implies_nopseudolinear}

If $\Sigma$ is a set of strings in general position and  $G(\Sigma)$  has an obstruction, then $\Sigma$ cannot be extended to an arrangement of pseudolines.
\end{observation}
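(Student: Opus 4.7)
The plan is proof by contradiction. Suppose $\Sigma$ does extend to an arrangement $\mathcal{A}$ of pseudolines, yet $G(\Sigma)$ contains an obstruction; among all obstructions, pick $C$ with $|\delta(C)|$ as small as possible. Since each string of $\Sigma$ is contained in a pseudoline and any two pseudolines of $\mathcal{A}$ cross exactly once, every string is simple and any two strings meet in at most one point. The remark immediately preceding Observation~\ref{obs:min_delta} therefore gives $|\delta(C)|\geq 3$. Because $C$ is an obstruction, its at most two rainbows leave at least one vertex $x\in\delta(C)$ that is reflecting in $C$.

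From $x\in\delta(C)$ and $x$ reflecting it follows that some non-$C$ edge $f$ at $x$ lying inside the closed disk $\overline{D}$ bounded by $C$ shares its string, call it $\sigma$, with a $C$-edge $e$ at $x$. Observation~\ref{obs:min_delta} applied to $(x,e,\sigma)$ says the component $\sigma'$ of $\sigma\setminus e$ containing $x$ satisfies $\sigma'\cap C=\{x\}$, so $\sigma'$ lies inside $\overline{D}$ and terminates at an endpoint $u$ of $\sigma$ in the open interior of $D$.

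I now exploit that $\sigma$ is a subarc of an unbounded pseudoline $\ell_\sigma\in\mathcal{A}$. The continuation of $\ell_\sigma$ past $u$ begins inside $D$ and, because $\ell_\sigma$ is unbounded, it must eventually leave $\overline{D}$, first meeting $C$ at some point $p\notin\sigma$. Concatenating $\sigma'$ with this continuation yields a simple arc $\alpha\subset\overline{D}$ whose only points on $C$ are $x$ and $p$, so $\alpha$ splits $\overline{D}$ into two subdisks bounded by $\alpha$ together with the two $xp$-subarcs of $C$. My plan is to imitate the splitting step in the proof of Observation~\ref{obs:min_delta}: pair $\alpha$ with each $xp$-subarc of $C$ to obtain two simple closed curves $C_1,C_2$, each with strictly smaller $|\delta|$ than $C$ (because the transition at $x$ disappears once we reroute along $\alpha$), and then track rainbows: interior vertices of $\alpha$ are reflecting in at least one of $C_1,C_2$ since $\sigma$ is reused there, while rainbows of $C_i$ lying in $V(C_i)\setminus V(\alpha)$ are already rainbows of $C$. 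A bookkeeping argument analogous to that in Observation~\ref{obs:min_delta} should then leave at least one of $C_1,C_2$ with at most two rainbows, producing a strictly smaller obstruction and contradicting the minimality of $C$.

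The delicate point is that $\alpha$ is not literally a walk in $G(\Sigma)$: the portion from $u$ to $p$ lives on $\ell_\sigma$ but outside $\sigma$, so $C_1$ and $C_2$ are a priori only topological cycles. To make the splitting rigorous one must either reroute the $up$-segment through the strings of $\Sigma$ that $\ell_\sigma$ meets between $u$ and $p$ (using the unique-crossing axiom to keep the reroute honest) or argue purely topologically that the shortcut $\alpha$ already refines $\delta$ and the rainbow count on at least one side. This last technical adjustment is where I expect the main work of the proof to concentrate.
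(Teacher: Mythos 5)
Your setup (minimal $|\delta(C)|$, $|\delta(C)|\ge 3$, a reflecting $x\in\delta(C)$, and Observation~\ref{obs:min_delta} forcing an end $u$ of $\sigma$ into the interior of $C$) matches the paper, but the step where you actually derive the contradiction is left unfinished, and the sketch you give for it does not go through as stated. Your curves $C_1$ and $C_2$ are built from the arc of $\ell_\sigma$ between $u$ and $p$, which is not part of any string of $\Sigma$; hence $C_1$ and $C_2$ are not cycles of $G(\Sigma)$, the quantities $\delta(C_i)$ and the notion of obstruction are not defined for them, and in particular they cannot contradict the minimality of $C$, which was a choice among obstructions of $G(\Sigma)$. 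Worse, the points where the $up$-arc meets other strings are not vertices of $G(\Sigma)$ at all (those crossings exist only in the extended arrangement), so the reflecting/rainbow bookkeeping of Observation~\ref{obs:min_delta} has nothing to attach to along that arc. You acknowledge this yourself and defer it as ``the main work of the proof,'' so this is a genuine gap, not a cosmetic one. A smaller omission: your first claim, that a $C$-edge at $x$ has its same-string continuation inside the disk, also needs an argument --- a priori the reflecting witness could be a third string with both edges inside; the paper rules this out by observing that such a witness would intersect the string of the $C$-edge tangentially at $x$, violating general position.

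The paper sidesteps the whole difficulty with one move made at the start: replace $\Sigma$ by the set obtained by extending every string to its pseudoline and truncating the infinite ends. The obstruction survives this enlargement, because the only new vertices on $C$ subdivide edges within a single string and are therefore reflecting; and now every string has both ends as degree-one vertices on the outer face of the underlying graph. With that normalization, the endpoint of $\sigma$ that Observation~\ref{obs:min_delta} forces into the interior of $C$ is already the contradiction --- no second splitting argument is needed. If you want to salvage your route instead, the honest repair is essentially the same idea: adjoin the truncated pseudoline (or all of them) as genuine strings so that your arc $\alpha$ becomes a path in the underlying graph of an enlarged string set in which $C$ is still an obstruction; once you do that, you are back to the paper's proof and the splitting you envisaged becomes unnecessary.
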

\begin{proof}
By  way of contradiction, suppose that there is a set of strings $\Sigma$ that can be extended to an arrangement of pseudolines and  $G(\Sigma)$ has an obstruction $C$.  Consider an extension of $\Sigma$ to an arrangement of pseudolines, and then  cut off the two infinite ends of each pseudoline to obtain
\junefif{a}
set of strings $\Sigma'$ extending $\Sigma$, and in which every two strings in $\Sigma'$ cross. In $G(\Sigma')$, there is a cycle $C'$ that represents the same simple closed curve as $C$. Because $C'$ is obtained from subdiving \septtwotwo{some edges} of $C$, $C'$ has fewer than three rainbows. Therefore, we may assume that $\Sigma=\Sigma'$ and $C=C'$. Now, the ends of every string in $\Sigma$ are degree-one vertices  in  the outer face of $G(\Sigma)$. 

As every string in $\Sigma$ is simple, and no two strings intersect more than once, $|\delta(C)|\geq 3$. \septtwotwo{We will assume that $C$ is chosen to minimize $|\delta(C)|$.}

 Since $C$ is an obstruction, there is at least one vertex  $x\in \delta(C)$ reflecting inside $C$. 
Let $e\in E(C)$ be an edge incident to $x$, and suppose that $\sigma$ is the string including $e$. 
Traversing $\sigma$ along $e$ through $x$, we encounter another edge $e'\subseteq \sigma$ incident to $x$.  Because $x\in \delta(C)$, $e'$ is not in $C$. Suppose that $e'$ is drawn in the outer face of $C$.
As $x$ is reflecting inside $C$, 
\junefif{there}
exists a string $\bar{\sigma}$ that includes two edges in the rotation at $x$ inside $C$. However, $\sigma$ and $\bar{\sigma}$ tangentially intersect at $x$, contradicting that the strings in $\Sigma$ are in general position. Therefore $e'$ is drawn inside $C$. 

Let $y$ be the end of $\sigma$ contained in the component of $\sigma\setminus e$ containing $x$. 
\feba{Since $|\delta(C)|$ is minimum, 
 Observation \ref{obs:min_delta} implies that
the component of $\sigma\setminus e$ having $x$ and $y$ as ends have all its points, with the exception of $x$,  in the inner face of $C$. However, $y$ is  drawn in the inner face of $C$, contradicting that  the ends of all the strings in $\Sigma$ are incident with the outer face of $G(\Sigma)$. }
\end{proof}

 \subsection{Extending sets of strings  with no obstructions}

In this subsection we  prove that  a set of strings with no obstructions can be extended to an arrangement of pseudolines. 
We restate Theorem \ref{MAIN} using  our new terminology.

\begin{theorem}
A set of strings $\Sigma$ in general position can be extended to an arrangement of pseudolines if and only if $G(\Sigma)$  has 
\junefif{no}
obstructions.
\end{theorem}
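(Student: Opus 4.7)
The only-if direction is exactly Observation \ref{obstruction_implies_nopseudolinear}, so the work is entirely in the if direction: assuming $G(\Sigma)$ has no obstructions, I must produce an extension of $\Sigma$ to an arrangement of pseudolines. My plan is to modify $\Sigma$ iteratively through three kinds of local moves, corresponding to the Disentangling, Face-Escaping, and Exterior-Meeting steps defined in the preamble; each move preserves the no-obstruction invariant while reducing a natural complexity measure, until $\Sigma$ can be completed to a pseudoline arrangement simply by pushing its ends out to infinity within the outer face.

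Concretely, I would introduce a potential $\Phi(\Sigma)$ that counts (a) the number of unordered pairs of strings in $\Sigma$ that do not yet cross, plus (b) the number of string ends lying in a bounded face of $G(\Sigma)$. The three moves are chosen so that $\Phi$ strictly decreases after each application. A \emph{Face-Escaping Step} pushes a trapped end of a string across one of the edges bounding its face, gradually working the end toward the outer face; an \emph{Exterior-Meeting Step} extends the outer-face ends of two strings until they finally cross; and a \emph{Disentangling Step} removes auxiliary crossings or tangles produced by the previous moves. Lemmas \ref{2SIDES} and \ref{2SIDESEDGE} are the technical engine that legitimizes each of these moves: roughly, they assert that under the no-obstruction hypothesis, one can always extend a string locally on either of its two sides (2SIDES), and in particular across a prescribed edge (2SIDESEDGE), without producing any new obstruction in $G(\Sigma)$.

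The main obstacle I anticipate is the pair of invariants that must be verified at every step: \textbf{(i)} the no-obstruction hypothesis is preserved, i.e., no new cycle with fewer than three rainbows is created, and \textbf{(ii)} whenever $\Phi(\Sigma)>0$, at least one of the three moves is applicable. For (i), the newly inserted arc of a string creates new cycles in $G(\Sigma)$, and I must show each such cycle contains three rainbows; Lemma \ref{2SIDESEDGE}, which directly certifies an edge-wise extension against obstruction creation, should be the key tool here, possibly combined with a minimum-$|\delta(C)|$ argument in the style of Observation \ref{obs:min_delta}. For (ii), I must show that a trapped end or an uncrossed pair always admits at least one legitimate direction of extension; this is exactly where Lemma \ref{2SIDES} provides the needed "room" on both sides of a string, ensuring progress whenever the configuration is not already essentially a pseudoline arrangement.

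Once $\Phi(\Sigma)=0$, every two strings of $\Sigma$ cross exactly once and every string end lies on the boundary of the outer face. At that point a single terminal routing step — sending each end to infinity within the unbounded face along disjoint arcs, which is possible because the outer face is simply connected after puncturing — converts each string into a pseudoline and completes the arrangement. This closes the induction and yields Theorem \ref{MAIN}.
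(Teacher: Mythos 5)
Your outline does follow the same architecture as the paper's argument (Observation \ref{obstruction_implies_nopseudolinear} for necessity, then the three extension steps followed by pushing all ends to infinity), but the entire mathematical content of the sufficiency direction is in the part you defer, and your plan misstates the tools you intend to use. Lemmas \ref{2SIDES} and \ref{2SIDESEDGE} do \emph{not} assert that ``one can always extend a string locally on either of its two sides without producing an obstruction.'' They assert the reverse implication: two cycles blocking complementary extensions --- two near-obstructions at $v$ whose rotations at $v$ cover both sides, respectively two obstructions through an edge $e$ whose interiors lie on the two sides of $e$ --- force an obstruction already present in $G(\Sigma)$, respectively an obstruction avoiding $e$. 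To make any use of this one must: enumerate all candidate extensions at the end being extended; show that if every candidate creates an obstruction, then each blocking cycle is a near-obstruction at the extension point (Disentangling, where one must also handle the ``twin'' edges of strings passing through the end and argue the extension is confined to the angle between them), or an obstruction through the new edge $f_p$ (Face-Escaping, which needs the separate claim that obstructions in $G(\Sigma_p)$ must contain $f_p$ when $p$ is an edge-midpoint or $p\notin\sigma$); and then exhibit a complementary pair of blocking cycles to feed into the lemmas. The Exterior-Meeting step is not covered by either lemma at all and needs its own argument (the paper finds, on any cycle through the new meeting point, a rainbow inside each of the two disks cut off by $\sigma_1$ and $\sigma_2$ from an auxiliary outer curve). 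None of these arguments appear in the proposal, so invariant (i) --- the heart of Theorem \ref{MAIN} --- is not established.

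Your termination scheme also fails as stated. The potential $\Phi$ (uncrossed pairs plus ends in bounded faces) need not decrease: a Disentangling extension is a short arc drawn inside a face, so it creates no new crossing pair, and it can move an end from the outer boundary into a bounded face, so $\Phi$ can stay constant or even increase; a Face-Escaping extension may terminate on a string the extended string already meets, or on $\sigma$ itself, again leaving both counts unchanged. Moreover $\Phi=0$ is not the right terminal condition: it does not force every end to have degree $1$, and an end of $\sigma$ lying on another string $\sigma'$ is a tangential contact that cannot simply be ``routed to infinity'' --- one would have to cross $\sigma'$, i.e.\ perform a further Disentangling step, and one must also know that any such crossing is the unique intersection of the pair (this follows because a lens formed by two strings crossing twice is an obstruction, a fact worth stating). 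The correct stopping state is the one the paper uses --- all ends of degree $1$ on the outer face and all strings pairwise intersecting --- and the preliminary reduction to $\bigcup\Sigma$ connected, which you omit, is what makes the outer-face bookkeeping and the final extension to pseudolines go through.
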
 
\begin{proof}
  We  showed in Observation \ref{obstruction_implies_nopseudolinear} that if $G(\Sigma)$ has an obstruction, then $\Sigma$ cannot be extended to an arrangement of pseudolines. 
 \junefif{For the converse, suppose}
 that $G(\Sigma)$ has no obstructions. 

We start by reducing the proof to the case in which the point set $\bigcup \Sigma$ is connected. If $\bigcup \Sigma$ is not connected, then we add a simple string to $\Sigma$, connecting two points in distinct  components of $G(\Sigma)$, and so that it is included inside a face of $G(\Sigma)$. This operation:  reduces the number of components;  does not create obstructions; and 
ensures that 
 any  pseudolinear extension of the new set of strings shows the existence of one for $\Sigma$. We continue adding strings 
\junefif{in}
this way until we obtain a connected set of strings and we redefine $\Sigma$ to be this set. 
\junefif{Thus,}
we may assume $\bigcup \Sigma$ is connected.

Our proof is algorithmic, and consists of repeatedly applying  one of the three steps described below. 

\begin{itemize}
\item {\bf Disentangling Step.} If a string $\sigma\in \Sigma$ has an end $a$ with degree at least 2 in $G(\Sigma)$, then we slightly extend the $a$-end of $\sigma$ into one of the faces incident with $a$. 

\item {\bf Face-Escaping Step.} If a string $\sigma\in \Sigma$ has an end $a$ with degree 1 in $G(\Sigma)$, and  is incident with an inner face, then we extend the $a$-end of $\sigma$ until we intersect some point in the boundary of this face. 

\item {\bf Exterior-Meeting Step.} Assuming that all the strings in $\Sigma$ have their two ends in the outer face and these ends have degree 1 in $G(\Sigma)$, we extend the ends of two disjoint strings so that they meet in the outer face.

\end{itemize}

We can always perform at least one of these steps, unless the strings are pairwise intersecting and all of them have their ends in the outer face (in this case we extend their ends to infinity to obtain the desired  arrangement of pseudolines).
Each step increases the number of pairwise intersecting strings. 
Henceforth, our aim is to show that, as long as there is a pair of non-intersecting strings, then one of these three steps may be performed \ther{without adding an obstruction}. 
The proof is now  divided into three parts that can be 
 \junefif{read}
 independently.
\begin{disentangling_step}
 Suppose that $\sigma\in \Sigma$  has an end $a$  with degree at least $2$ in $G(\Sigma)$. Then we can
 extend the $a$-end of $\sigma$ into one of the faces incident to $a$ without creating an obstruction.
\end{disentangling_step}

\begin{proof}
\dise{
An edge $f$  of $G(\Sigma)$  incident with $a$ is a {\em twin} if there exists another edge $f'\neq f$ incident with $a$ such that both $f$ and $f'$ are part of the same string in $\Sigma$. Observe  that the edge $e_0\subseteq \sigma$ incident with $a$ is not a twin.
}

\dise{
The fact no pair of strings  tangentially intersect at $a$ tells us that if $(f_1,f_1')$ and $(f_2,f_2')$ are pairs of corresponding twins, then $f_1$, $f_2$, $f_1'$, $f_2'$ occur in this cyclic order for either the clockwise or counterclockwise rotation at $a$. Thus, we may assume that  the twins at $a$ are labeled as $f_1,\ldots, f_t, f_1',\ldots, f_t'$, and that this is their counterclockwise order occurrence when we follow the rotation at $a$ starting at $e_0$. In such a case,   $(f_i,f_i')$ is a pair of corresponding twins for $i=1,\ldots, t$. 
}

\dise{
In order to avoid tangential intersections when twins \disen{are} present,  every valid extension of $\sigma$ at $a$ must cross into the angle between $f_t$ and $f_1'$ not containing $e_0$. 
}

\dise{
Let $(e_1, \ldots, e_k)$ be the list of non-twin edges between $f_t$ and $f_1'$ in the counterclockwise rotation at $a$; this list might be empty. \disen{In the case there 
are no twins, we set $f_t$ 
and $f’_1$ both equal to 
$e_0$, so $(e_1,…,e_k)$ 
is all the edges incident 
with $a$ other than 
$e_0$.}
}

 We consider all the \dise{feasible} extensions for $\sigma$: for each \dise{$i\in\{1,...,k-1\}$}, we let $\Sigma_i$  be the set of strings obtained 
from extending  $\sigma$ by adding a small bit of arc $\alpha_i$ starting at $a$, and continuing into the face between $e_i$ and $e_{i+1}$. \disent{Let  $\Sigma_0$ be the set of strings obtained by adding an arc $\alpha_0$ in the face between $f_t$ and $e_1$, and let $\Sigma_k$ be obtained by adding an arc $\alpha_k$ in the face between $e_k$ and $f_1'$.
}

Seeking a contradiction, suppose that, for each $i\in \{0,...,k\}$,  $G(\Sigma_i)$ contains an obstruction $C_i$. The cycle $C_i$ does not include the bit of arc $\alpha_i$ as an edge, so $C_i$  is  a cycle in $G(\Sigma)$.
 This  cycle  is not an obstruction in $G(\Sigma)$, although it becomes one when we add $\alpha_i$. The reason explaining this conversion  is simple: in $G(\Sigma)$,  $C_{i}$  has  exactly three vertices not reflecting, and one of them is $a$.  After $\alpha_i$ is added, $a$ is  now reflecting in $C_i$ (witnessed by $\sigma$). 
 
Understanding how cycles with exactly three rainbows may behave in an obstruction-less set of strings is a crucial piece of the proof. In general, if $v$ is a vertex in  the underlying plane graph of a set of strings in general position, then a {\em near-obstruction at $v$} is a cycle with exactly three rainbows, and one of them is $v$.  Each of the cycles  $C_0$, $C_1$,...,$C_k$ above is a near-obstruction at $a$ in $G(\Sigma)$.
 \begin{figure}
 \centering
 \includegraphics[scale=0.4]{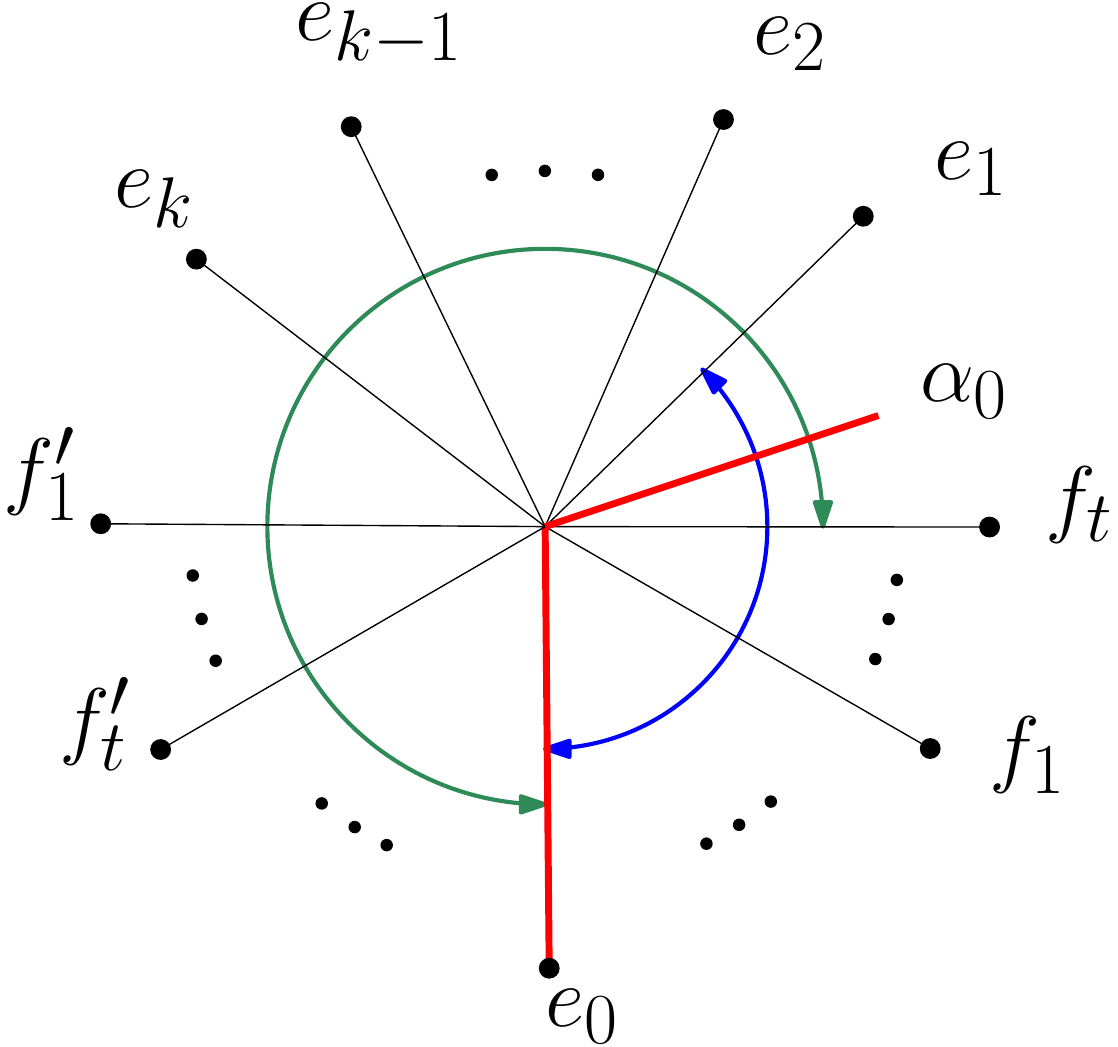}
  \caption{Substrings included in the disk bounded by $C_0$.}
  \label{fig:substrings_rotation}
 \end{figure}

 \disen{
 Both $e_0$ and $\alpha_0$ are on the disk bounded by $C_0$, and since $\alpha_0$ is not part of $C_0$, either $e_0, f_1, f_2,\ldots, f_t, e_1$ are on the same side  of $C_0$ (blue bidirectional arrow in Figure \ref{fig:substrings_rotation}) or all of $f_t, e_1, \ldots, e_k, f_1', f_2',\ldots, f_1',e_0$  are \disent{in} the same side of $C_0$ (green bidirectional arrow in Figure \ref{fig:substrings_rotation}).
 \disenta{
 Because $\alpha_0$ is the only edge between $f_t$ and $e_1$, we see that $e_1$ belongs to the first sublist (the blue one) and $f_t$ belongs to the second list (the green one).
 }
 In the second case both $f_t$ and $f'_t$ are \disent{in} the disk bounded by  $C_0$, showing that $a$ is not a rainbow \disent{for $C_0$ in $\Sigma$}. Therefore, all of $e_0, f_1, f_2,\ldots, f_t, e_1$ are \disent{in} the disk bounded by $C_0$.
 
 }
 
 \dise{
  Regardless \disen{of} the presence or absence of twins, we know that $(e_0, e_1)$ occurs as a sublist of the rotation of $a$ inside $C_0$. A symmetric argument shows that $(e_k,e_0)$ occurs as a sublist of the rotation of $a$ inside $C_k$. 
 }
 
 \disen{
 
Since $(e_0,e_1)$ is a substring of the rotation at $a$ inside $C_0$ and $(e_0,e_1,\ldots,e_k,$ $f'_1)$ is not inside $C_k$, there is a largest $i\in \{0,1,\ldots, k-1\}$ such that $(e_0,\ldots, e_{i+1})$ is inside $C_i$. The choice of $i$ implies $(e_{i+1},\ldots, e_k,e_0)$ is inside $C_{i+1}$.
 
 }

\disen{
The next lemma states that the existencce of such a pair of cycles $C_i$ and $C_{i+1}$ is impossible, completing the proof. 
}
\end{proof}

  \begin{restatable}{lemma}{twosides}
\label{2SIDES}
Let  $\Sigma$ be a set of strings  in general position. Suppose that $C_1$ and $C_2$ are cycles in $G(\Sigma)$ that are near-obstructions  at $v$, so that  the rotation at $v$ inside $C_1$  includes (as a sublist) the rotation at $v$ outside $C_2$, and that the rotation at $v$ inside $C_2$ includes the rotation at $v$ outside $C_1$. Then $G(\Sigma)$ has an obstruction.
\end{restatable}

 We defer the proof of  Lemma \ref{2SIDES} to Section \ref{sec:teclemmas} as it  is technical and it deviates our attention from the proof of Theorem \ref{MAIN}.


\begin{face_escaping}
 Suppose that there is a string $\sigma$ that has an end $a$ with  degree 1 in $G(\Sigma)$, and $a$  is  incident to an inner face $F$. 
 \junefif{ Then there is an extension $\sigma'$ of $\sigma$ from its $a$-end to a point in the boundary of $F$ such that the set $(\Sigma\setminus\{\sigma\})\cup \{\sigma'\}$ has no obstruction.}
\end{face_escaping}
\begin{proof}
\begin{figure}[ht]
 	
    \centering
    
    \includegraphics[scale=0.8]{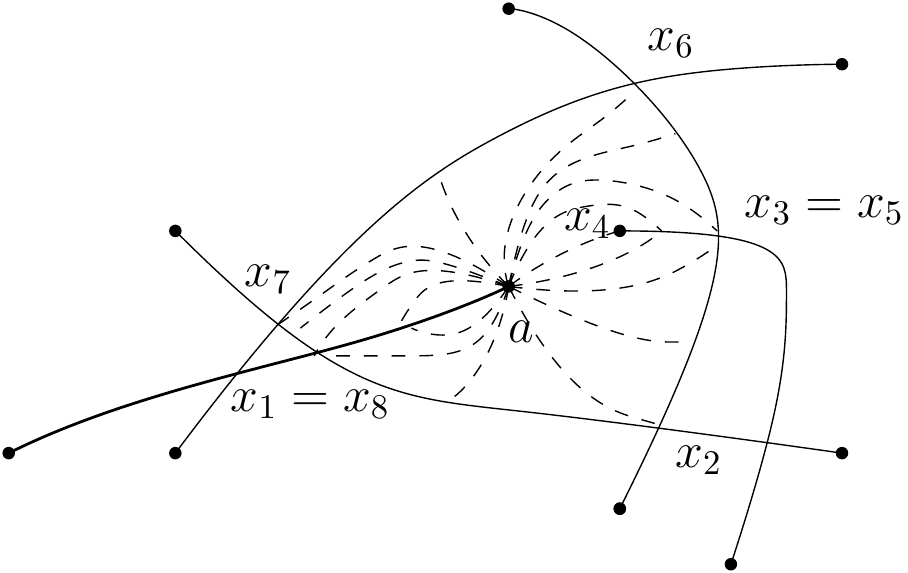}
    \caption{All possible extensions in the Face-Escaping Step.}
 \label{f_s_possible_extensions}
\end{figure}

\junefif{Let $W$ be the closed boundary walk $(x_0,e_1,\ldots,e_n,x_n)$  of $F$ such that $x_0=x_n=a$ and $F$ is to the left as we traverse $W$.}
\apriloneseven{
Let $P$  denote the list of points $(m_1, x_1,m_2,x_2,\ldots, x_{n-1}, m_n)$. For each point $p$ in $P$, let $\Sigma_p$ be the set of strings obtained from $\Sigma$ by extending the $a$-end of $\sigma$ adding an arc $\alpha_p$ connecting $a$ to $p$ in $F$ (see Figure \ref{f_s_possible_extensions}). 
}

\apriloneseven{
Figure \ref{f_s_middle} shows the importance of considering extensions meeting points in the middle an edge in the boundary of $F$,   as sometimes this is the only way for extending $\sigma$ without creating an obstruction. 
}

\begin{figure}[ht]
 	
    \centering
    
    \includegraphics[scale=0.6]{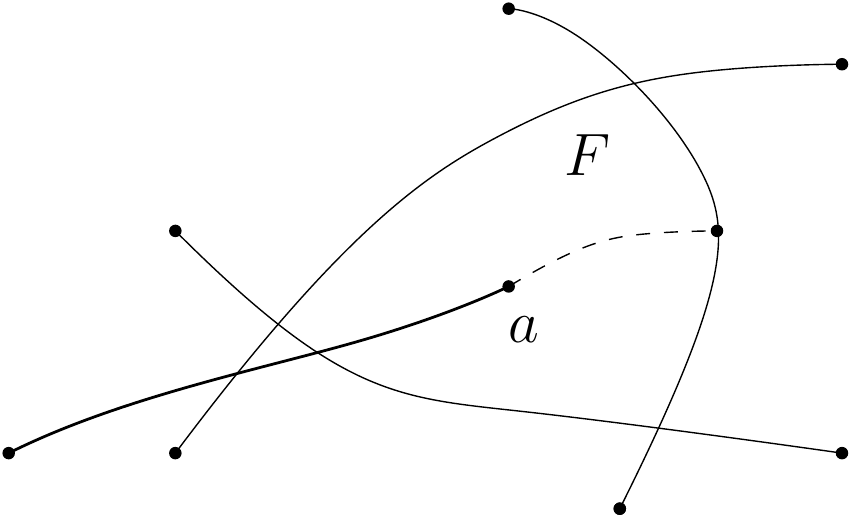}
    \caption{Face-Escaping Step.}
 \label{f_s_middle}
\end{figure}

Let $f_p$ be the edge $e_1\cup \alpha_p$ in $G(\Sigma_p)$; it has ends $x_1$ and $p$. Also, let $\sigma^p=\sigma\cup \alpha_p$. The existence of obstructions in $G(\Sigma_p)$  is independent of how we draw $\alpha_p$ inside $F$. We will take advantage of this fact later on in the proof.




Seeking a contradiction, suppose that each  $G(\Sigma_p)$ has an obstruction. 
Our next claim 
\junefif{gives two sufficient}  conditions on $p$ 
\junefif{that}
imply that all the obstructions in $G(\Sigma_p)$ contain $f_p$.

\setcounter{claim}{0}

\begin{claim}\label{middle_points_obs}
Let $p\in P$ 
be either one of $m_1,\ldots,m_n$ or not in $\sigma$. Then every  obstruction in $G(\Sigma_p)$  includes $f_p$. 
\end{claim}
\begin{proof}
Let $p\in P$ be such that  there is an obstruction $C$ in $G(\Sigma_{p})$ not including $f_{p}$. 

First, we show that $p$ is not a vertex in the middle of an edge in $W$. By contradiction, suppose  that $p=m_i$ for some $i\in \{1,...,n\}$.
\septtwotwo{Since $m_i$ is the only vertex 	 whose rotation in $G(\Sigma)$ differs from its rotation in $G(\Sigma_{m_i})$, $m_i\in V(C)$.}
 Consider the cycle $C'$ of $G(\Sigma)$ obtained by replacing the subpath $x_{i-1},m_i,x_{i}$ of $C$ by the edge $x_{i-1}x_{i}$. The 
\junefif{inside rotation of each vertex in $C'$}
 is the same as their rotation inside $C$. This shows that $C'$ is an obstruction in $G(\Sigma)$, a contradiction.  

Now suppose that $p$ is not in the middle of an edge in $W$. Then $C$ is a cycle in $G(\Sigma)$ and is not an obstruction in $G(\Sigma)$. The only vertex in $G(\Sigma_p)$ that has a rotation  that is different from its rotation in $G(\Sigma)$ is $p$. Therefore $p$ is a point in $C$ that is reflecting inside $C$ (witnessed by  two edges included in $\sigma^p$), and is  not reflecting in $C$ with respect to $G(\Sigma)$. Exactly one of the two witnessing edges is  in $G(\Sigma)$.  
So $p\in \sigma$. 
\end{proof}

More can be said about the obstructions in $G(\Sigma_p)$ for each point in $P$, but for this we need some terminology.  If we orient an edge $e$ in a plane graph, then the {\em sides} of $e$ are either the points near  $e$ that are to the right of $e$, or the points near $e$ to the left of $e$. 
Our next lemma shows that if $p\in P$, then all the obstructions in $G(\Sigma_p)$ include the same side of $f_p$ in its interior face. We defer its proof to Section \ref{sec:teclemmas} to keep the flow of the current proof. For the  convenience of the reader,  we provide all the hypotheses in the statement. 

\begin{restatable}{lemma}{twosidesedge}
\label{2SIDESEDGE}
Let $\Sigma$ be a set of strings in general position. Let $C_1$ and $C_2$ be obstructions  in $G(\Sigma)$ with $e\in E(C_1)\cap E(C_2)$. 
If  $C_1$ and $C_2$ include distinct  sides of $e$ in their interior faces,  then $G(\Sigma)$ has an obstruction not including $e$. 
\end{restatable}
\junefif{
The condition on the two cycles $C_1$ and $C_2$ containing distinct sides of $e$ \rev{implies that}   $e$ \rev{is}  incident with only interior faces of $C_1\cup C_2$. The perspective of the cycles being on distinct sides of $e$ is useful in the application, but what we really use in the proof of Lemma \ref{2SIDESEDGE} is that $e$ is not incident with the outer face of $C_1\cup C_2$.}

For each point $p\in P$, we will consider an  obstruction $C_p$  containing $f_p$; the choice of $C_p$ will be  more specific when $p\in \sigma$ (see below). 
\apriloneseven{
For $p\in P$, we orient $f_p$ from $x_1$ to $p$, so that we keep track of the side of $f_p$ contained in the interior of $C_p$. 
}

\apriloneseven{
Observe that $C_{x_1}$ contains the right of $f_{x_1}$ while $C_{x_{n-1}}$ contains the left of $f_{x_{n-1}}$ (here we use the fact that $F$ is bounded). This implies the existence of two consecutive vertices $x_{i-1}$, $x_i$ in $W-a$, such that}
 the interior of $C_{x_{i-1}}$ includes the right of $f_{x_{i-1}}$ and the interior of $C_{x_{i}}$ includes the left of $f_{x_{i}}$.



Without loss of generality, suppose that 
the interior of $C_{m_i}$ includes the left of $f_{m_i}$ (otherwise we reflect our drawing in a mirror).  To make the notation simpler, we let $x=x_{i-1}$ and $m=m_i$. We  may assume that $f_{m}$ is drawn near the left of $f_x$. 

The  next \octtwofour{claim} is the last ingredient to obtain a final contradiction.

\begin{claim}\label{claim:bothsides}
\septtwotwo{Exactly one of the following holds:}
   \begin{itemize}

       \item[(a)] \label{item1:bothsides} \septtwotwo{ $x\in \sigma$ and}  $G(\Sigma_m)$ has an obstruction containing $f_m$ whose interior includes a side that is distinct from the side included by $C_m$; \septtwotwo{or}
       \item[(b)]  \label{item2:bothsides} \septtwotwo{ $x\notin \sigma$ and} $G(\Sigma_x)$ has an obstruction containing $f_x$ whose interior includes a side of $f_x$ that is distinct from the side included by $C_x$.
    \end{itemize}

\end{claim}

\begin{proof}
\junefif{
\septtwotwo{First, suppose that $x\in \sigma$.} For (\ref{claim:bothsides}.a) we have two cases depending on whether \feba{$x_{i-1}x_i$} is an edge in $C_x$.}
\begin{enumerate}[label={\bf Case a.1}, wide, labelwidth=!, labelindent=0pt]
 \item {\em \feba{$x_{i-1}x_i$} is not in  $C_x$.}
 \end{enumerate}
 
 In this case we consider the cycle $C_m'$ obtained by replacing in $C_x$ the edge $f_x$ by the path  $P=(x_1$, $f_m$, $m$, $mx$, $x$). Since $x\in \sigma$, by the choice of $C_x$, all the edges in $C_x$ are in $\sigma^x$.
 Therefore all the edges in $C_m'$, with the possible exception of 
 $mx$, are in $\sigma^m$. Thus $C_m'$  is  an obstruction in $G(\Sigma_m)$. 
 
 It remains to show that the interior of $C_m'$ includes the right side of $f_m$. Note that $C_x\cup P$ consists of  three internally disjoint $x_1x$-paths, and because some points in $P$ are near the left side of $f_x$,  $P$ is in the outer face of $C_x$. The face of $f_x\cup P$ that is to the right of $f_m$ is included in the inner face $F$, so it is bounded. This implies the interior face of $C_m'$     includes the right of $f_m$. Since the interior of $C_m$ includes the left of $f_m$,   $C_m'$ and $C_m$ are  obstructions  including  distinct  sides of $f_m$.
 \begin{enumerate}[label={\bf Case a.2.}, wide, labelwidth=!, labelindent=0pt]
  \item {\em
\feba{$x_{i-1}x_i$} is in $C_x$. }
 \end{enumerate}

 In this case, $(x_1$, $f_{x}$, $x$, \feba{$xx_i$}, $x_{i})$ is a subpath 
of $C_x$.  We let $C_{m}'$ be the cycle obtained 
by replacing this path by $P=(x_1, f_m, m, mx_{i}, x_{i})$. Since $x\in \sigma$, the way we choose $C_x$ implies that all the edges in $C_x$ are in $\sigma^x$. So  all the edges in $C_m'$ are in $\sigma^m$, and  $C_m'$ is an obstruction. An  argument similar to  the one given in the previous case shows that the interior of $C_m'$ includes the right side of $f_m$. Thus the  interior of $C_m$ and $C_m'$ include distinct sides of $f_m$. 

\junefif{
Turning to (\ref{item2:bothsides}.b),        \septtwotwo{ let us suppose that $p\not\in \sigma$.} We split the proof into two cases depending on whether $x$ is in $C_m$.}
\begin{enumerate}[label={\bf Case b.1.}, wide, labelwidth=!, labelindent=0pt]
\item {\em $x$ is in $C_m$.}
\end{enumerate}

First, we redraw $f_x$ and $f_m$ inside $F$ so that $f_x\cap f_m=\{x_1\}$. Let $T$ be the triangle bounded by $f_x$, $f_m$ and $xm$. The interior face of $T$ is to the left of $f_x$ and to the right of $f_m$.  
Consider   the $mx$-path $P$  of  $C_m$ that does not include the edge $f_m$. Since the interior face of $T$ is a subset of $F$, $P$ is drawn in the closure of  the exterior of $T$ (possibly \septtwotwo{$P=(m,mx,x)$}). 

Let $C$ be the simple closed curve bounded by $P\cup f_x\cup f_m$. We claim that the interior of $C$ is on the left of $f_x$. In the alternative, suppose that the interior of $C$ is on the right of $f_x$. Then $C'=P+ xm$ is a cycle of $G(\Sigma_m)$ including $f_x$ and $f_m$ in its interior. The $xx_1$-path $P'$ of $C_m$ that does not include $m$, is an arc connecting  $x_1$ to $x$ inside $C'$. Thus, $V(C')\subseteq V(C_m)$ and the closed disk bounded by $C'$ includes
$C_m$.  These two observations together imply that $C'$ has at most as many rainbows as $C_m$, and hence, $C'$ is an obstruction of $G(\Sigma_m)$ not including $f_m$.  Claim \ref{middle_points_obs} asserts that  all the obstructions in $G(\Sigma_m)$ include $f_m$, a contradiction.   Thus the interior of $C$ is on the left of $f_m$. 

From our last observation, it follows that $P'$ is an arc connecting $x_1$ and $x$ in the exterior of $C$. Because the interior of $C_m=P'\cup f_m \cup P$ is on the left of $f_m$, the interior of the cycle $C_x'=P'+f_x$ is on  the left of $f_x$. 

Now we show that $C_x'$ is an obstruction. 
Note that $V(C_x')\subseteq V(C_m)$ and that the closed disk bounded by $C_x'$ includes $C_m$. Then, every rainbow  in $C_x'$ is a rainbow in $C_m$, and hence $C_x'$ is an obstruction.  The cycles $C_x$ and $C_x'$ are obstructions including distinct sides of $f_x$ in their interiors, as claimed.
\begin{enumerate}[label={\bf Case b.2.}, wide, labelwidth=!, labelindent=0pt]
\item {\em $x$ is not in $C_m$.}
\end{enumerate}

In this case we let $C_x'$  be the cycle obtained by replacing the path $(x_1, f_m$, $m, mx_{i}$, $x_{i})$ in $C_{m}$ by the path $P= (x_1$, $f_x$, $x$, \marchtwo{$xx_i$}, $x_{i})$ in $G(\Sigma_x)$. Let $\alpha$ be the subarc of $P$ joining $x_1$ to $m$. As the points of $\alpha$ near $x_1$ are drawn on the left of $f_m$, and $\alpha$ is internally disjoint to \septtwotwo{$C_m$},  $\alpha$ connects $x_1$ and $m$ in the exterior of $C_m$. Since the  interior face of \septtwotwo{$\alpha\cup f_m$} is on the \marchtwo{left} of $f_x$, the interior face of $C_x'$ is on the left of $f_x$.

To show that $C_x'$ is an obstruction, note that the disk bounded by $C_x'$ includes $C_m$ and that $V(C_x')\setminus\{x\}\subseteq V(C_m)$. Thus all the rainbows  of $C_x'$ in $V(C_x')\setminus\{x\}$ are also rainbows in \feba{$C_m$}. The rotation of $x$ inside $C_x'$ is the list $(\feba{xx_i}, f_x)$, and, because  $x\notin \sigma$, $x$ is a rainbow   in $C_x'$, and is not a vertex of $C_m$. To compensate, we note that $m$ is a rainbow  in $C_m$ that is not in $V(C_x)$: if $m$ is not rainbow, both $f_m$ and $xx_{i}$ are included in $\sigma$, implying that $x\in \sigma$. This  shows that $C_x'$ has at most as many rainbows  as $C_m$. Thus $C_x'$ is an obstruction.   Again, the interiors of $C_x$ and $C_x'$ include distinct sides of $f_x$.
\end{proof}

By Claim \ref{claim:bothsides}, for some $p\in\{x,m\}$, $G(\Sigma_p)$ has obstructions including both sides of $f_p$ (and when $p=x$, we can guarantee that $p\notin \sigma$). Lemma \ref{2SIDESEDGE} implies that $G(\Sigma_p)$ has an obstruction not including $f_p$. Since either $p\notin \sigma$ or $p=m$, this last statement contradicts Claim \ref{middle_points_obs}. 
\end{proof}


\begin{exterior_meeting}
Suppose that all the strings in $\Sigma$ have their ends \feba{on} the outer face of $G(\Sigma)$ and that all the ends have degree $1$ in $G(\Sigma)$. Then either all the strings are pairwise intersecting, and then $\Sigma$ can be extended to an arrangement of pseudolines, or we can extend two disjoint strings so that these strings  intersect without creating an obstruction.
\end{exterior_meeting}
\begin{proof}

\setcounter{claim}{0}

We start by considering a simple closed curve $\mathcal{O}$ containing all the ends of the strings in $\Sigma$, and that is otherwise disjoint 	from $\bigcup\Sigma$. We  construct this curve by connecting  each pair of vertices  with degree 1 that are consecutive in the boundary walk of the outer face. 
To connect these pairs  we use an arc whose interior is included in the outer face, near the portion of the boundary walk between the two vertices. 

\feba{
Suppose $\sigma_1$, $\sigma_2$ are two disjoint strings in  $\Sigma$.}
For  $i=1,2$, let  $a_i$, $b_i$ be  the  ends of $\sigma_i$. 
 Since $\sigma_1$ and $\sigma_2$ do not intersect inside $\mathcal{O}$, their ends do not alternate as we traverse $\mathcal{O}$ in counterclockwise order. We may assume, by relabeling if necessary,  that the ends occur in the order $a_1$, $b_1$, $b_2$, $a_2$. 
 
 We extend the $a_i$-ends of $\sigma_1$ and $\sigma_2$  so that they meet in a point $p$ in the outer face. We  do this extension so that  the two added segments are in the outer face,  and\feba{,} more \feba{importantly}, so that the interior face of the simple closed curve bounded by the added segments and the $a_2a_1$-arc in $\mathcal{O}$ not containing $\{b_1,b_2\}$, does not include the inner  face of $\mathcal{O}$. In Figure \ref{how_to_extend_f_e} we show the right and wrong way to extend, respectively.
 
 \begin{figure}[ht]
 	
    \centering
    
    \includegraphics[scale=0.6]{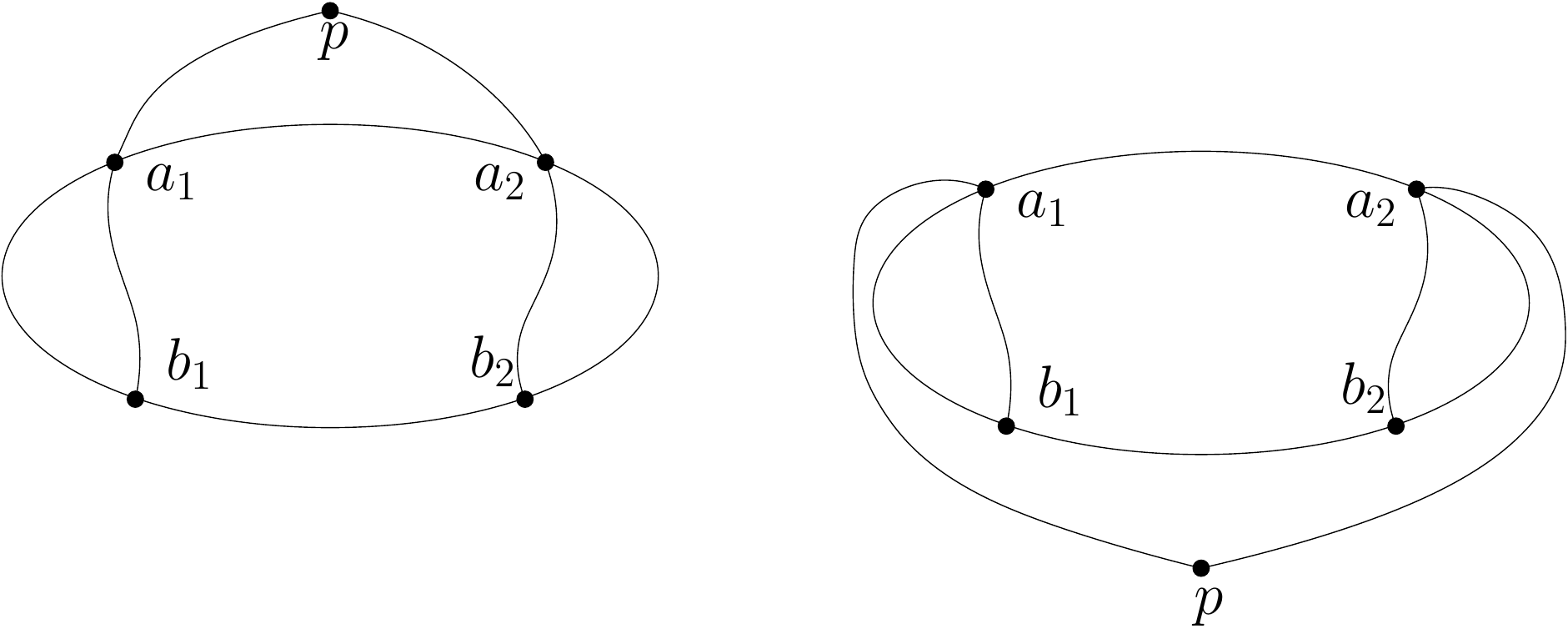}
    \caption{The right and wrong way to extend  in the Exterior-Meeting Step.}
 \label{how_to_extend_f_e}
\end{figure}

\septtwotwo{We denote  the new set of strings obtained as above \ther{by $\Sigma'$}.}
To show that $\Sigma'$ has no obstruction, we consider a cycle $C$
in $G(\Sigma')$. If $C$ does not contain $p$, then $C$ is a cycle in $G(\Sigma)$, and so is  not an obstruction in $G(\Sigma')$. Now suppose that $p$ is in $ C$. 

The idea is to find three rainbows  in $C$. To get the first one, we consider   the path $P_1$  obtained by  traversing $C$, starting at $p$, continuing along the path induced by $\sigma_1$, and stopping  just before we reach a first vertex not in $\sigma_1$. Let $c_1$ be the last vertex in $P_1$,  and  let $d_1$ be the \septtwotwo{neighbour} of $c_1$ in $C$ that is not in $P_1$.  

\begin{claim}\label{claim:one_rainbow_in_disk}
The cycle $C$ has a rainbow  included  in the disk $\Delta_1$ bounded by $\sigma_1$ and the $a_1b_1$-arc of $\mathcal{O}$ not containing $a_2$.
\end{claim}
\begin{proof}
The vertex $d_1$ is in one of the two bounded faces of $\mathcal{O}\cup \sigma_1$.
Suppose  that $d_1$ is in the  face $F$ that is bounded by $\sigma_1$ and the $a_1b_1$-arc of $\mathcal{O}$ containing $a_2$ and $b_2$. 
The rotation at $c_1$ inside $C$ does not include two edges in the same string $\sigma$, as otherwise $\sigma$ and 
\junefif{
$\sigma_1$} tangentially intersect at $c_1$. Therefore, when $d_1\in F$,  $c_1$ is  a rainbow  of $C$ in $\Delta_1$. 

\junefif{Now suppose that $d_1$ is in $\Delta_1$.}
\junefif{
Let $P_1'$ be the path \septtwotwo{of $C$} starting at $c_1$ and the edge $c_1d_1$, and ending at the first vertex we encounter that is in}
\junefif{$\sigma_1$.} 
The cycle $C'$ enclosed by $P_1'$ and 
\junefif{
$\sigma_1$} is not an obstruction, so it has at least three rainbows.
The vertices in $C'-V(P_1')$ are reflecting inside $C'$ because their rotations inside $C'$ contain two edges in $\sigma$.  Hence  at least one internal vertex of \ther{$P_1'$} is a rainbow in \marchtwo{$C'$}. This  vertex is also \ther{a} rainbow in $C$, and is included in $\Delta_1$. 
\end{proof}

 Considering  $\sigma_2$ instead of $\sigma_1$, Claim \ref{claim:one_rainbow_in_disk} yields a second rainbow   in $C$ inside an  analogous disk $\Delta_2$.  The third rainbow   is $p$, showing that $C$ is not an obstruction.
\end{proof}
Since the Disentangling Step, Face-Escaping Step and Exterior-Meeting Step can be performed without creating new obstructions, either\feba{:} one of these steps can be performed to increase the number of pairwise intersecting strings in $\Sigma$\feba{;} or the strings in $\Sigma$  are pairwise intersecting and all of them have their ends in the outer face, which implies that $\Sigma$
can 
\junefif{be}
extended to an arrangement of pseudolines.
\end{proof}

\section{Proof of Lemmas \ref{2SIDES} and \ref{2SIDESEDGE}}\label{sec:teclemmas}
We deferred the proofs of   Lemmas \ref{2SIDES} and \ref{2SIDESEDGE}, both essential in the proof of Theorem \ref{MAIN}, to this section.

Our next observation follows immediately  from the definition of  rainbow, and it will be repeatedly used in the  next proofs.

\begin{usefulfact}
Let $\Sigma$ be 
set of strings in general position. 
Let $v$ be a vertex that is in both the cycles $C$ and $C'$ of $G(\Sigma)$ such that the rotation at $v$ inside $C$  includes  the rotation at $v$ inside $C'$. If $v$ is a rainbow  in $C$, then 
\junesix{v}
is a rainbow in $C'$.
\end{usefulfact}

\junesix{
Recall that a {\em near-obstruction} at $v$ is a cycle
$C$ (in the underlying graph of a set of strings) that has precisely three
rainbows, one of which is $v$. In Figure \ref{fig:local_near_obstructions}, we depict (up to symmetries) how two near-obstructions may intersect at $v$. In each of the nine diagrams, $v$ is represented as a black dot, while the interiors of the near-obstructions are represented as dotted and dashed lines. In our next lemma,  we will consider two near-obstructions at $v$ that intersect only as  in the last three diagrams, where
 \feba{every} small open disk centered at $v$ is included in the \feba{union}  of the disks bounded by the two near-obstructions. In the statement, an equivalent description is given in terms of the local rotation at $v$.

}

\begin{figure}[ht]
 	
    \centering
    
    \includegraphics[scale=0.7]{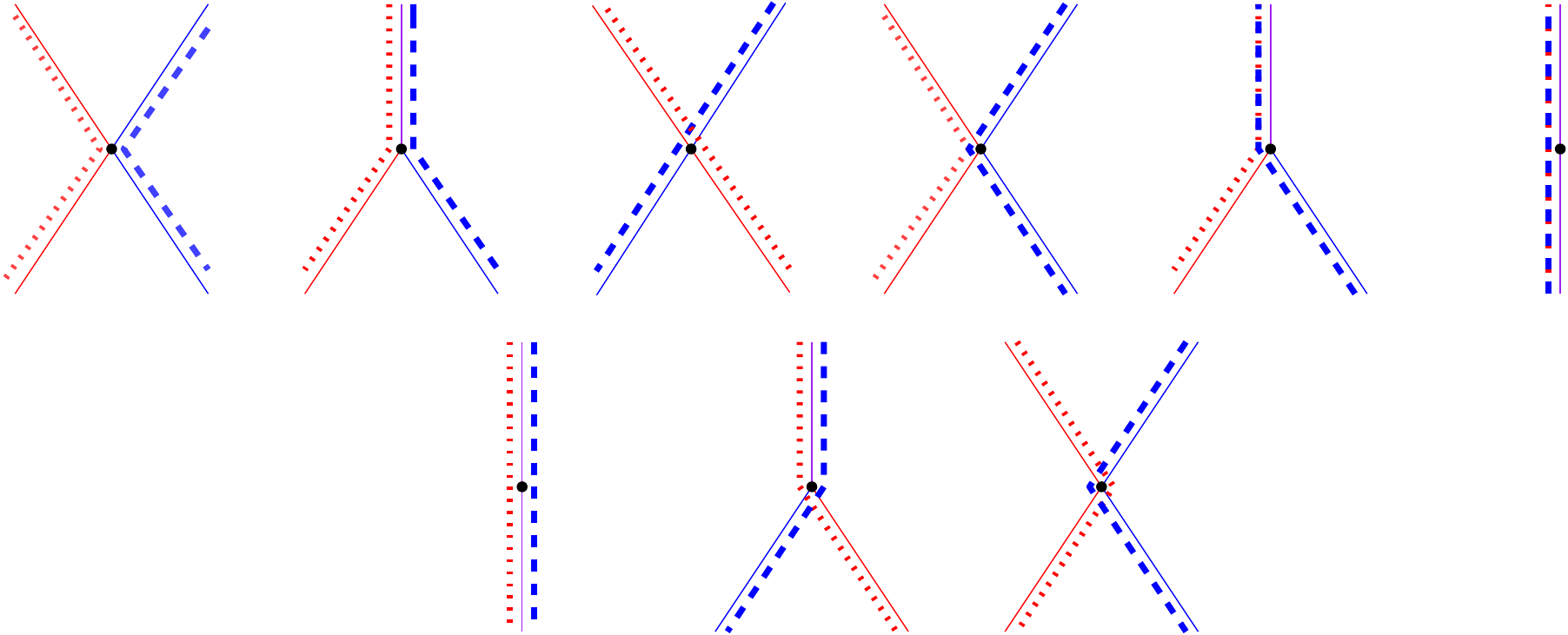}
    \caption{Two near obstuctions at $v$.}
\label{fig:local_near_obstructions}
\end{figure}

\twosides*

\setcounter{claim}{0}
\begin{proof}
In order to obtain a contradiction,  suppose that $G(\Sigma)$ has no obstructions and that it contains such cycles $C_1$, $C_2$.
\junesix{
The conditions on the rotation at $v$ imply that every edge incident with $v$ is in the interior of either $C_1$ or $C_2$. Thus, $v$ is not incident with the outer face of $C_1\cup C_2$. }

Our next goal is to show that $C_1\cap C_2$ has at least two vertices. 
\feba{If  $e$ is an edge of $C_1$ incident with $v$, then either $e$ is an edge of $C_2$ or $e$ is inside $C_2$. In the former case $|V(C_1)\cap V(C_2)|\geq 2$, thus we may assume that both edges of $C_1$ incident with  $v$ are inside $C_2$. If $v$ is the only vertex in $V(C_1)\cap V(C_2)$, then $C_1-v$ is in the interior of $C_2$, and hence the edges of $C_2$ incident with $v$ are not in the rotation at $v$ inside $C_1$, a contradiction. Thus  $|V(C_1)\cap V(C_2)|\geq 2$.}
 It follows that $C_1\cup C_2$ is 2-connected; in particular, its outer face is bounded by a cycle $C_{out}$.


The Useful Fact applied to  $C=C_{out}$ and to each $C'\in \{C_1, C_2\}$, shows that every  vertex that is a rainbow in $C_{out}$ is also a rainbow in each of the cycles in $\{C_1, C_2\}$ containing it.
\junesix{
By assumption, $C_{out}$ is not an obstruction, so it has at least three rainbows.
\junefif{The \feba{preceding} two sentences imply that we may choose}
 the labelling such that two of them, say $p$ and $q$, are also rainbows in $C_1$. Neither $p$ nor $q$ is $v$ and $C_1$ is a near-obstruction. Thus, $p$ and $q$ are the only rainbows of $C_{out}$ that are in $C_1$. 
}

Since $v\notin V(C_{out})$, $C_1$ has a subpath 
\junesix{$P_v$}
  containing $v$  in which only the ends of 
\junesix{$P_v$}
are in $C_{out}$. Since $v$ is not in the outer face of $C_{out}$, \junesix{$P_v$}
is included in the inner face of $C_{out}$. 
 We let $u$ and $w$ be the ends of 
\junesix{$P_v$,}
and  let   
\junesix{$Q_{out}^1$, $Q_{out}^2$}
be the 
\junesix{$uw$-paths}
of $C_{out}$.
\junesix{
The cycle  $C_1$ is inside one of the two disks bounded by 
\junesix{$P_v$}
and one of  $Q_{out}^{1}$ and $Q_{out}^{2}$.} By symmetry, we may assume that $C_1$ is included in the disk bounded by 
\junesix{$Q_{out}^1\cup P_v$.}
In this case 
\junesix{$Q_{out}^2$}
is a subpath of $C_2$.

Our desired contradiction will be obtained by finding three rainbows  in $C_2$  distinct from $v$. The first is \junefif{relatively} easy to find: \junefif{if} 
\junesix{$C_1-(P_v)$}
is the   $uw$ path  in $C_1$ distinct from  
\junesix{$P_v$,}
we consider the cycle 
\junesix{$(C_1-(P_v))\cup Q_{out}^2$.}
The disk bounded by  
\junesix{$(C_1-(P_v))\cup Q_{out}^2$}
contains the one bounded by $C_1$. Then the Useful Fact applied to 
\junesix{$C=(C_1-(P_v))\cup Q_{out}^2$}
and $C'=C_1$, implies that each vertex in  
\junesix{$C_1-(P_v)$}
that is rainbow  in 
\junesix{$(C_1-(P_v))\cup Q_{out}^2$}
is also rainbow in $C_1$.  Since $C_1$ has at most two rainbows in 
\junesix{$C_1-(P_v)$,}
namely $p$ and $q$,  
\junesix{$(C_1-(P_v))\cup Q_{out}^2$}
must have a third rainbow  $r_1$  in the interior of 
\junesix{$Q_{out}^2$.}
The \junesix{interiors of the}  disks bounded by $C_2$ and 
\junesix{$(C_1-(P_v))\cup Q_{out}^2$}
are on the same side of 
\junesix{$Q_{out}^2$;}
\junesix{thus}
$r_1$ is a rainbow for $C_2$.

To find another rainbow   in $C_2$,  consider the edge $e_u$ of $C_2$ incident to $u$ and not in 
\junesix{$Q_{out}^2$.}
We claim that either $u$ is a rainbow in $C_2$ or that $e_u$ is not included in the
\junefif{closed}
disk bounded by 
\junesix{$P_v\cup Q_{out}^2$.}
Looking for a contradiction, suppose \junesix{that} $u$ is reflecting in $C_2$ and that $e_u$ is included in the disk. Then  we can find two edges in the rotation at $u$, included in the disk bounded by $P_v\cup Q_{out}^2$,
that belong to  the same string $\sigma$. The vertex $u$ is not reflecting in $C_1$, as else, we would find another pair of edges in the rotation at $u$ inside 
\junesix{$Q_{out}^1\cup P_v$,}
and included in a different string $\sigma '$; in this case, $\sigma$ and $\sigma '$ tangentially intersect at $u$, a contradiction. Therefore $u$ is a rainbow in $C_1$, so 
\junesix{$u$}
is one of $p$ and $q$. This implies that $u$ is a rainbow in $C_{out}$, and hence, a rainbow in $C_2$, a contradiction. 

\junefif{
If $u$ is a rainbow in $C_2$, then this is the desired second one. Otherwise, the preceding paragraph shows that $e_u$ is not in the closed disk bounded by $P_v\cup Q_{out}^2$. In this latter case, $e_u$ is in a path $P_u$ that starts at $u$, ends at $u'$ in $P_v$ and is otherwise disjoint from $P_v$.}

\junefif{
Note that $u'\neq w$, as otherwise $C_2=P_u\cup Q_{out}^2$ and we have the contradiction that $v$ is not in $C_2$. Let $C_u$ be the cycle consisting of $P_u$ and the $uu'$-subpath $uP_vu'$ of $P_v$.}

\begin{claim} \label{claim:smaller_disk}

If $P_u$ does not have a rainbow of $C_u$ in its interior, then:
\begin{itemize}
\item[(a)] $C_u$ and $C_2$ are near-obstructions at $v$ satisfying the conditions in Lemma \ref{2SIDES};
and 
\item[(b)] the closed disk bounded by the outer cycle of $C_u\cup C_2$ contains fewer vertices than the disk bounded by $C_{out}$.
\end{itemize} 

\end{claim}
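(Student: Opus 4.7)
The plan is to verify parts (a) and (b) by tracking how $C_u$ inherits rainbow structure from $C_1$, and by showing that $C_u \cup C_2$ fits strictly inside $C_{out}$.

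For part (a), I first establish $v \in V(C_u)$. Since $P_w$ (the $uw$-subpath of $C_2$ complementary to $Q_{out}^2$) contains $v$, and $u'$ is the first vertex of $P_v$ encountered along $P_w$ starting from $u$, the options for $u'$ along $P_v$ are: $u' = v$, $u'$ strictly between $u$ and $v$, or $u'$ strictly between $v$ and $w$. I rule out the middle case: if $u'$ lay strictly between $u$ and $v$ on $P_v$, then $v \not\in V(C_u)$. The internal vertices of $P_u$ would not be rainbows of $C_u$ by the claim's hypothesis, and the internal vertices of $uP_v u'$ are reflecting in $C_1$ (the rainbows of $C_1$ other than $v$ are $p, q \in V(C_{out})$, and no internal vertex of $P_v$ lies on $C_{out}$); because $C_u$ and $C_1$ are locally on the same side of $P_v$ at such vertices, the reflecting property transfers, and at most two rainbows $u$ and $u'$ remain. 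This makes $C_u$ an obstruction, contradicting our standing assumption. Hence $u' = v$ or $u'$ lies strictly between $v$ and $w$, and in both cases $v \in V(C_u)$.

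Next, the two edges of $C_u$ incident with $v$ are the two edges of $P_v$ at $v$, and since $P_u$ is drawn on the $Q_{out}^1$-side of $P_v$ (because $e_u$ lies outside the disk bounded by $P_v \cup Q_{out}^2$), the closed disks bounded by $C_u$ and by $C_1$ agree in a neighborhood of $v$. Therefore the rotation at $v$ inside $C_u$ coincides with the rotation at $v$ inside $C_1$, so the sublist-inclusion conditions for $(C_1, C_2)$ transfer to $(C_u, C_2)$, giving the hypotheses of Lemma \ref{2SIDES}. A rainbow count for $C_u$ now proceeds as above: the only candidates are $v$, $u$, and $u'$; $v$ is a rainbow since the rotations match $C_1$ locally at $v$; and the non-obstruction assumption on $G(\Sigma)$ forces $u$ and $u'$ to be rainbows as well. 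Hence $C_u$ is a near-obstruction at $v$, establishing (a).

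For part (b), I use $P_u \subseteq C_2$ to write $C_u \cup C_2 = C_2 \cup (uP_v u')$. The outer cycle $C_{out}'$ of $C_u \cup C_2$ is contained in the closed disk bounded by $C_{out}$ since $C_u \cup C_2 \subseteq C_1 \cup C_2$. For strict containment of vertices, I observe that vertices on the $Q_{out}^1$-side of the $C_{out}$-disk that do not belong to $V(C_u \cup C_2)$ end up in the outer face of $C_u \cup C_2$, hence outside the $C_{out}'$-disk. Concrete sources of such vertices include the internal vertices of $Q_{out}^1$ and the internal vertices of the $u'w$-subpath of $P_v$ that are not on $C_2$; since $C_1$ is a near-obstruction with two rainbows $p, q$ on $V(C_{out})$ in addition to $v$, and since $w \ne u'$, at least one such vertex is guaranteed to exist.

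The main obstacle I anticipate is the topological verification in part (b): carefully tracking the embedding of $C_u \cup C_2$ to determine which vertices of $G(\Sigma)$ in the $Q_{out}^1$-side of the $C_{out}$-disk get removed from the $C_{out}'$-disk, and handling degenerate cases (e.g., when $Q_{out}^1$ is a single edge or when $u'$ is adjacent to $w$) so that the strict decrease remains valid.
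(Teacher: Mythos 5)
Your part (a) follows essentially the paper's route (rainbow counting on $C_u$ plus matching the rotation at $v$ inside $C_u$ with that inside $C_1$), but it has a flaw: you allow the case $u'=v$ and then assert that the two edges of $C_u$ incident with $v$ are the two edges of $P_v$ at $v$. That is false when $u'=v$, since then one of the two edges of $C_u$ at $v$ is the last edge of $P_u$, and the rotation at $v$ inside $C_u$ need not coincide with the rotation inside $C_1$, so the transfer of the Lemma \ref{2SIDES} hypotheses breaks. The repair is the same counting you already use for the case $u'$ between $u$ and $v$: if $u'=v$, every interior vertex of $uP_vu'$ is an interior vertex of $P_v$ other than $v$, hence reflecting in $C_1$ and so in $C_u$, and with the hypothesis on $P_u$ the cycle $C_u$ would have at most the two rainbows $u,u'$, i.e.\ it would be an obstruction, a contradiction. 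The paper avoids the case analysis altogether: since $C_u$ is not an obstruction it has a rainbow in the interior of $uP_vu'$, any such rainbow is a rainbow of $C_1$ and therefore equals $v$; this forces $v$ to be an interior vertex of $uP_vu'$ and gives the rotation identity at once.

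Part (b) contains the genuine gap, which you yourself flag. Your claim that every vertex on the $Q_{out}^1$-side of the $C_{out}$-disk not in $V(C_u\cup C_2)$ lies in the outer face of $C_u\cup C_2$ is false in general: such a vertex can lie in the open disk bounded by $C_u$, or in a bounded face of $C_2$, and then it remains inside the disk bounded by the new outer cycle. Moreover your ``concrete sources'' of a dropped vertex need not exist: $p$ and $q$ may be the vertices $u$ and $w$ themselves (so $Q_{out}^1$ need not supply an internal vertex off $C_u\cup C_2$), $u'$ may be adjacent to $w$ on $P_v$, and any internal vertices of the $u'w$-subpath of $P_v$ may lie on $C_2$; your appeal to ``$p,q\in V(C_{out})$ and $w\neq u'$'' does not rule these situations out. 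The missing idea is the rainbow count on $C_2$ that the paper uses: if both $p$ and $q$ were on $C_2$, the Useful Fact makes them rainbows of $C_2$, and together with $r_1$ and $v$ the near-obstruction $C_2$ would have four rainbows; hence one of them, say $p$, is not on $C_2$, and since $p\in C_{out}$ it is not an interior vertex of $P_v$, so $p\notin V(C_u)\cup V(C_2)$. Being on $C_{out}$ and off $C_u\cup C_2$, the vertex $p$ lies in the outer face of $C_u\cup C_2$, so it is in the closed disk bounded by $C_{out}$ but not in the one bounded by the outer cycle of $C_u\cup C_2$, which is exactly the strict decrease. Without this step the induction measure in (b) is not established.
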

\begin{proof}

Suppose that all the 
 \junesix{rainbows}
  of $C_u$ are located in 
  \junesix{$u P_v u'$.}
   Since $C_u$ is not an obstruction, at least one of them is an interior vertex of 
   \junesix{$u P_v u'$.}
   Each vertex in the interior of 
   \junesix{$u P_v u'$}
   that is a rainbow in $C_u$, is also a rainbow in $C_1$. As $v$ is the only vertex in the interior of 
   \junesix{$P_v$}
   that is a rainbow in $C_1$,  $v$ is the only  rainbow of $C_u$ that is in the  interior  of 
   \junesix{$u P_v u'$.}
   \junefif{ Since $C_u$ is not an obstruction, $u$, $u'$ and $v$ are the only rainbows of $C_u$, and $C_u$ is a near-obstruction at $v$. The rotation at $v$ inside $C_u$ is the same as inside $C_1$, so $C_u$ and $C_2$ satisfy the conditions in Lemma \ref{2SIDES}.}

\junefif{Let $C_{out}'$ be the outer cycle of $C_u\cup C_2$.}
Since $C_{u}\cup C_2\subseteq C_1\cup C_2$, the exterior of $C_{out}$ is included in the exterior of $C_{out}'$.  This shows that the disk bounded by $C_{out}$ includes the disk bounded by $C_{out}'$.


If both $p$ and $q$ are in $C_2$, then $p$, $q$ and $r_1$ are  rainbows  in $C_2$, and also distinct from $v$, contradicting that $C_2$ is a near-obstruction for $v$. Thus, we may assume $p\notin C_2$. 
 Then  $p$ is not in 
 \junesix{$ P_u\subseteq C_2$}
 and, since 
  $p$ is not an interior vertex  of 
  \junesix{$P_v$,} 
  \junesix{$p\notin V(C_u)$.}
  \junefif{
  Since $p$ is in $C_{out}$, and $p$ is not in $C_u\cup C_2$, $p$ is in the outer face of $C_u\cup C_2$.} 
  \septtwotwo{Then $p$ is in the disk bounded by $C_{out}$ but not by $C_{out}'$, as required.}
\end{proof} 

\junefif{
The proof of the existence of the additional two rainbows in $C_2$ is by induction on the number of vertices in the closed disk bounded by $C_{out}$. If $u$ is not a rainbow in $C_2$ and $P_u$ does not have a rainbow of $C_2$ in its interior, then Claim \ref{claim:smaller_disk} implies $C_u$ and $C_2$ make a smaller instance and we are done. Thus, we may assume one of them yields the next additional rainbow.}

\junefif{
In the same way, either the induction applies or the last rainbow comes by considering the edge of $C_2-Q_{out}^2$ incident with $w$. It follows that $v$, $r_1$, and these two other vertices are four different rainbows in $C_2$, contradicting the fact that $C_2$ is a near-obstruction. }
\end{proof}

Although the statements and  proofs of Lemmas \ref{2SIDES} and \ref{2SIDESEDGE} are  similar, some subtle differences make 
\junefif{it}
hard to find  a  statement  encapsulating both results. For instance,  Lemma \ref{2SIDESEDGE} assumes that  $G(\Sigma)$ has  obstructions, while finding an obstruction is the conclusion of Lemma \ref{2SIDES}. We sketch the proof of Lemma \ref{2SIDESEDGE}, emphasizing such differences. \junesix{ It would be interesting to find  a common  theory behind these two lemmas.}

\twosidesedge*

\begin{proof}[Sketch of the proof]
We start assuming that such cycles exist and that every obstruction  includes $e$. 

\junesix{
By assumption, $C_1\cap C_2$ has at least two
vertices and, therefore, $C_1\cup C_2$ is $2$-connected. Thus, its outer face is
bounded by a cycle $C_{out}$.}

The Useful Fact shows  that every rainbow  in $C_{out}$ is a rainbow in each of the cycles 
\junefif{$C_1$ and $C_2$}
containing it. 

\junesix{Since $C_1$ and $C_2$ include
different sides of $e$, it follows that $e$ is not in $C_{out}$.
Therefore  $C_{out}$  is not an obstruction.}
\junesix{Thus, $C_{out}$}
has at least three rainbows, and by our previous observation, 
\junesix{we may choose the labelling such that two of them, say $p$ and $q$, are also rainbows in $C_1$.}
Because $C_1$ is an obstruction,  $p$ and $q$  are the only rainbows in  $C_1$.

 Then  $C_1$ has a subpath 
 \junesix{$P_e$}
 of  containing $e$ and in which only the ends $u$ and $w$ of 
 \junesix{$P_e$}
 are in $C_{out}$. 
Let 
\junesix{$Q_{out}^1$}
and 
\junesix{$Q_{out}^2$}
be the $uw$-paths of $C_{out}$. We may assume 
\junesix{that}
$C_1$ is drawn in the disk bounded by 
\junesix{$Q_{out}^1\cup P_e$.} 

Let 
\junesix{$C_1- (P_e)$}
be $uw$-path in $C_1$ that is not 
\junesix{$P_e$.}
Note that $p$ and $q$ are the only vertices  in  
\junesix{$C_1- (P_e)$}
that are rainbows in 
\junesix{$(C_1- (P_e))\cup Q_{out}^2$.}
Since 
\junesix{$(C_1- (P_e))\cup Q_{out}^2$}
is not an obstruction, the interior 
\junesix{$Q_{out}^2$}
has a vertex $r_1$ that is a rainbow of  
\junesix{$(C_1- (P_e))\cup Q_{out}^2$.}
This vertex $r_1$  is also a rainbow of $C_2$. 

Let $e_u$ be the edge incident to $u$ in $C_2$ that is not in 
\junesix{$Q_{out}^2$.}
As we did in Lemma \ref{2SIDES}, we can show  that either $u$ is a rainbow in  $C_2$ or that $e_u$ is  not included in the disk bounded by 
\junesix{$P_e\cup Q_{out}^2$.} We  assume the latter situation, as in the former we found \junefif{our desired second  rainbow} in $C_2$.

\junefif{
Let $P_u$ be the subpath of $C_2$ starting at $u$, continuing on $e_u$, and ending on the first vertex 
$u'\in V(P_e)\cap V(C_2)$
distinct from $u$. 
Note that $u'\neq w$, as otherwise $C_2=P_u\cup Q_{out}^2$ and we have the contradiction that $e$ is not in $C_2$. Let $C_u$ be the cycle consisting of \septtwotwo{$P_u$} and the $uu'$-subpath $uP_eu'$ of $P_e$. }

\junefif{
We claim that 
either
$P_u$ has an interior vertex that is a rainbow in $C_2$ or that there is a pair of cycles $C_1'$ and $C_2'$ satisfying the conditions in Lemma \ref{2SIDESEDGE}, but with fewer vertices in the closed disk bounded by the outer cycle of $C_1'\cup C_2'$ than in the disk bounded by $C_{out}$.}

Suppose that none of the interior vertices in  $P_u$ is a rainbow in $C_2$. 
Because the interior of $P_e$
has no vertices that are rainbows in $C_1$ 
\junesix{(as $p$ and $q$ are the only rainbows of $C_1$),}
the interior of 
\junesix{$uP_eu'$}
has no vertices that are rainbows in $C_u$.
\junefif{ Therefore $C_u$ is an obstruction, 
and  $C_1'=C_u$ and $C_2'=C_2$ is a pair of obstructions including both sides of $e$. 
As $C_u\cup C_2\subseteq C_1\cup C_2$, the closed disk bounded
by the $C_{out}$ contains the closed disk bounded by the outer cycle of $C_u\cup C_2$. Not both of $p$ and $q$ are in the outer cycle of $C_u\cup C_2$, as both $p$ and $q$ would be part of $C_2$, concluding that $C_2$ has three rainbows $p$, $q$ and $r_1$, and contradicting that $C_2$ is an obstruction.}

\junefif{
From the previous paragraph, either $C_u$ and $C_2$ is a smaller instance, and we are done by induction on the number of vertices in the closed disk bounded by $C_{out}$, or we found our second rainbow of $C_2$ in the interior of $P_u$.}

\junefif{
In the same way, either the induction applies or the last rainbow comes by considering an edge of $C_2-Q_{out}^2$ incident with $w$. It follows that $r_1$, and these other two vertices are three different rainbows in $C_2$, contradicting that $C_2$ is an obstruction.}
\end{proof}

\section{Finding obstructions in polynomial time}\label{sec:find_obs}

In this section we describe a polynomial-time algorithm that determines whether
\junefif{a}
set of strings has an obstruction. We will assume that our input is the underlying plane graph $G(\Sigma)$   of a set $\Sigma$ of simple strings  in general position, and  \septtwotwo{  that every string in $\Sigma$ is identified as a path in $G(\Sigma)$ (see notation below). }
 
\septtwotwo{
 The key idea behind the algorithm is simple: \feba{either} find an obstruction in the outer boundary of $G(\Sigma)$ or find a vertex  in the outer boundary whose removal reduces our problem into a smaller instance.  }
 
 \septtwotwo{ We start \ther{by} describing the vertex removal operation. 
 Suppose that $x$  is a vertex \feba{of $G(\Sigma)$}  incident to the outer face of $G(\Sigma)$.
For each $\sigma\in \Sigma$, we consider the path $P_\sigma$ of $G(\Sigma)$ representing $\sigma$. Let  $P_\sigma-x$ be the plane graph obtained from $P_\sigma$ by removing $x$  and the edges of $P_\sigma$ incident to $x$  \feba{(}if $x\notin P_\sigma$, then $P_\sigma-x=P_\sigma$\feba{)}.    Each component of  $P_\sigma-x$ is either a vertex that represents an end of $\sigma$, or   a string. Let $S_{\sigma,x}$ be the set of string components of $P_\sigma-x$ and let $\Sigma-x=\bigcup_{\sigma\in \Sigma}S_{\sigma,x}$. Note that $G(\Sigma-x)$ can be obtained from $G(\Sigma)$ by removing  $x$ and the edges incident to $x$, and then suppressing the \octtwofour{degree-$2$} vertices whose incident edges belong to the same string in $\Sigma$, as well as removing remaining \octtwofour{degree-$0$} vertices (Figure \ref{fig:need_to_suppress} \octtwofour{illustrates} this process). 

}

\begin{figure}[ht]

    \centering
    
    \includegraphics[scale=0.8]{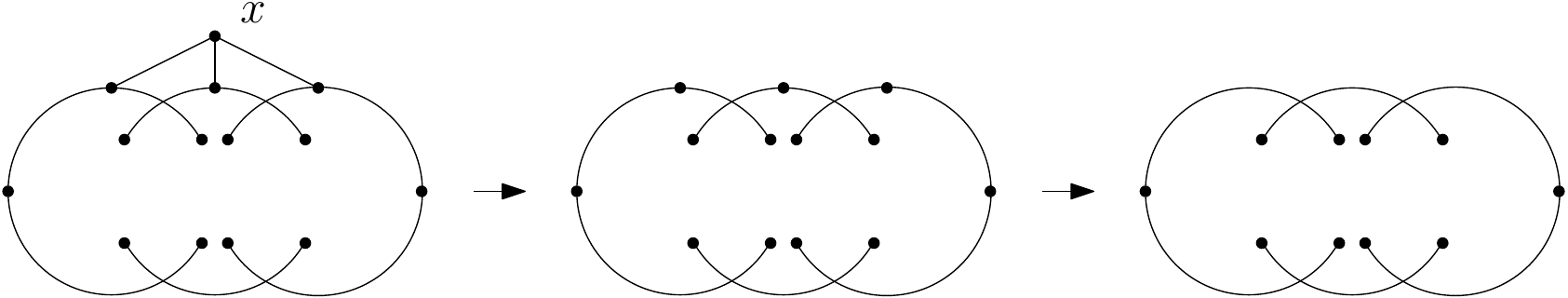}
    \caption{From $\Sigma$ to $\Sigma-x$.}
	 	\label{fig:need_to_suppress}
\end{figure}

\septtwotwo{The next lemma is the key property used in the algorithm.} 

\begin{lemma} \label{correctness}
\septtwotwo{
Let  $\Sigma$ be a set of simple strings  in general position and let $x$ be a vertex incident with the outer face. Then there is a $1-1$ correspondence between the obstructions in $G(\Sigma)$  not containing $x$ and the obstructions of $G(\Sigma-x)$.  }
\octtwofour{Moreover, corresponding obstructions are the same simple closed curve.}

\end{lemma}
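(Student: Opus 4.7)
First, I would establish the key geometric fact that $x$ lies in the open outer face of every cycle $C$ of $G(\Sigma)$ with $x\notin V(C)$. Indeed, the outer face of $G(\Sigma)$ is a connected open region disjoint from every cycle of $G(\Sigma)$, and hence is contained in the outer face of $C$; since $x$ is incident with the former and $x\notin V(C)$, it follows that $x$ lies in the open outer face of $C$. A direct consequence is that for every $v\in V(C)$ adjacent to $x$, the edge $vx$ leaves $v$ into the exterior of $C$, so $vx$ does not appear in the rotation at $v$ inside $C$. A second useful observation is that in $G(\Sigma)$ every vertex is either a crossing (of degree at least $4$) or a string end, so no vertex of $G(\Sigma)$ itself has degree $2$ with both incident edges in the same string. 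Consequently, a vertex $u$ of $G(\Sigma)$ is suppressed in the passage to $G(\Sigma-x)$ if and only if $u$ has degree exactly three in $G(\Sigma)$, is adjacent to $x$, and its two non-$x$ edges lie in the same string of $\Sigma$.

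Next, I would set up the bijection $\Phi$ that sends a cycle $C$ of $G(\Sigma)$ with $x\notin V(C)$ to the closed walk $C^*$ in $G(\Sigma-x)$ tracing the same simple closed curve: $V(C^*)$ is obtained from $V(C)$ by discarding the vertices that are suppressed in the passage to $G(\Sigma-x)$, and each edge of $C^*$ is the concatenation along the curve of the consecutive edges of $C$ between successive surviving vertices. The inverse map lifts a cycle $C'$ of $G(\Sigma-x)$ by subdividing each of its edges at the suppressed vertices of $G(\Sigma)$ lying on it; since $x$ is not in the interior of any edge of $G(\Sigma-x)$, the lifted cycle avoids $x$. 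These maps are mutually inverse, and by construction corresponding cycles bound the same simple closed curve.

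Finally, I would show that $C$ is an obstruction if and only if $C^*$ is, by matching rainbows. Every vertex $u\in V(C)\setminus V(C^*)$ is suppressed, so by the first step its two $C$-edges lie in the same string and $u$ is reflecting in $C$; such vertices contribute no rainbows, and their removal does not change the rainbow count. For a vertex $v\in V(C^*)$, the rotation at $v$ inside $C^*$ is obtained from the rotation at $v$ inside $C$ by (i) possibly deleting the edge $vx$, which by the first step was not in the inner rotation, and (ii) replacing any edge $vu$ with $u$ suppressed by the merged edge that continues through $u$, which lies in the same string as $vu$. Hence the cyclic sequence of strings recorded by the inner rotation at $v$ is unchanged, and $v$ is a rainbow in $C$ if and only if $v$ is a rainbow in $C^*$. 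The main technical obstacle is case (ii): one must carefully track the merging across suppressed neighbors and verify that the multiset of strings in the inner rotation at $v$ is preserved, which follows cleanly from the characterization of suppressed vertices.
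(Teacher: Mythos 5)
Your proposal is correct and follows essentially the same route as the paper's proof: both set up the correspondence via the identical simple closed curve (with suppression only merging consecutive edges of the same string) and then match rainbows using the key fact that $x$, being incident with the outer face, lies in the exterior of any cycle avoiding it, so no edge $vx$ occurs in an inner rotation. Your explicit characterization of which vertices get suppressed is extra detail the paper does not need, but it does no harm.
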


\begin{proof}
\septtwotwo{
In general, there is a natural correspondence between cycles in $G(\Sigma)$ not containing $x$ and cycles in $G(\Sigma-x)$: 
\feba{if}  $C$  \octtwofour{is} a cycle in $G(\Sigma)$ not containing $x$, then    every  edge of $C$ is not incident with $x$, and hence every edge  is part of a string in $\Sigma-x$. Thus, there is a cycle $C'$ in $G(\Sigma-x)$ that represents the same simple closed curve as $C$. Conversely, each cycle $C'$ in $G(\Sigma-x)$ is a simple closed curve in $\bigcup (\Sigma-x)\subseteq \bigcup\Sigma$, and hence, there is a cycle $C$ in $G(\Sigma)$  representing  the same simple closed curve as $C'$.

To complete the proof
 it is enough to show that any two  cycles $C$, $C'$ that correspond as above have the same   rainbows. 
Since $G(\Sigma-x)$ is obtained from suppressing and removing vertices in a subgraph of $G(\Sigma)$,   $V(C')\subseteq V(C)$.  Thus,  $V(C)\setminus V(C')$ 
 consists of suppressed \marchtwo{and removed} vertices in the process of converting $G(\Sigma)$ into $G(\Sigma-x)$. Since $x\notin V(C)$, if $v\in V(C)$ is suppressed, then   the two  edges of $C$ incident to $v$ belong to the same string in $\Sigma$. \octtwofour{Therefore}, none of the vertices in $V(C)\setminus V(C')$ is a rainbow in $C$. 
 
 Every rainbow in $C$ is also a rainbow in $C'$ because every  two edges of $G(\Sigma-x)$ that are included in distinct strings of $\Sigma$ are also included in  distinct strings in $\Sigma-x$.

Conversely, \octtwofour{suppose that} $v\in V(C)\cap V(C')$ is reflecting \octtwofour{in} $C$. Let $\sigma\in \Sigma$ be a string including two  edges of $G(\Sigma)$ in the  rotation at $v$ inside $C$. Since $x$ is drawn in the exterior of $C$, these two edges are part of the same string in $\Sigma-x$, and hence $v$ is reflecting inside $C'$. 

Therefore every rainbow of $C'$ is a rainbow of $C$, and thus, $C$ and $C'$  have the same   rainbows. 
}
\end{proof}

A vertex in $G(\Sigma)$ is an
\newcommand{\outr}{outer-rainbow }{\em \outr} if it is in the outer boundary and all the edges in its rotation belong to different strings. Note that every \outr  is a rainbow  for all the cycles in $G(\Sigma)$ that contain it.

 An {\em outer cycle} is a cycle of $G(\Sigma)$ that has all its edges incident to the outer face of $G(\Sigma)$. 
 \junefif{
 For any graph $G(\Sigma)$, a {\em block} of $G(\Sigma)$ is a maximal connected subgraph of $G(\Sigma)$ with no cut-vertex. If $G(\Sigma)$ is connected with at least two vertices, then each block is either an edge or is $2$-connected. In the latter case, the outer face of the block is bounded by a cycle of the block.}

We  find obstructions by solving 
\junefif{an}
auxiliary problem: finding  obstructions including one or two fixed outer-rainbows.  
The next subroutine \septtwotwo{(Algorithm \ref{alg:1})} describes how to find an obstruction containing two fixed outer-rainbows. Below we discuss its correctness.

\vspace{0.5cm}
\begin{algorithm}[H]
  \SetAlgoLined
  \KwData{
   $G(\Sigma)$ and two \junefif{outer-rainbows}  $x$ and $y$.}
  \KwResult{\junefif{Either an obstruction containing $x$ and $y$ or that no such obstruction exists.} }

  \Repeat{$V(G(\Sigma))=\{x,y\}$ }{
  	\If{there is no cycle containing $x$ and $y$}
    { \Return $G(\Sigma)$ has no obstruction containing $x$ and 								$y$;
            	}
      Find the outer cycle $C$  containing 		$x$ and $y$\;
		\While{\septtwotwo{$C$ is not the outer boundary of $G(\Sigma)$}}{
		\septtwotwo{Pick $w\in V(G(\Sigma))\setminus V(C)$ incident with the outer face\;
		$\Sigma\longleftarrow \Sigma-w$}
		
		}

        \eIf{$C$ has \junefif{a} rainbow  $z\notin\{x,y\}$ in $G(\Sigma)$}
        {\septtwotwo{$\Sigma\longleftarrow \Sigma-z$;} }
        {\Return $C$;}

  }
 { \Return   $G(\Sigma)$ has no obstruction containing $x$ and $y$. }
  
  \caption{Finding obstructions through two fixed outer-rainbows.}
  \label{alg:1}
\end{algorithm}
\vspace{0.5cm}

\septtwotwo{To see that Algorithm \ref{alg:1} is correct, observe that} when Step 2 does not apply, then Step 5 can be performed:
\marchtwo{if} there is a cycle containing $x$ and $y$, then, as $x$ and $y$ are incident to the outer face of  $G(\Sigma)$, the outer boundary  of the block containing $x$ and $y$ is an outer cycle $C$ containing $x$ and $y$. \septtwotwo{ Every obstruction $\mathcal{C}$ through $x$ and $y$ is \octtwofour{drawn in the closed disk bounded by $C$.}
 Lemma \ref{correctness} guarantees that if we remove a  vertex in the outer boundary that is not in $C$ (Step 7) and we update $\Sigma$ (Step 8), then $\mathcal{C}$  (or more precisely, the cycle in the new $G(\Sigma)$ \octtwofour{that is} the same simple closed curve as $\mathcal{C}$) is  an obstruction through $x$ and $y$. }

\septtwotwo{
In Step 10, if $x$ and $y$ are the only rainbows of $C$, then $C$ is an obstruction returned in Step 13. Else, $C$ has  a rainbow $z\notin \{x,y\}$. Any obstruction $\mathcal{C}$ through $x$ and $y$ does not contain $z$, and hence removing $z$  and updating $\Sigma$ (Step 11) does not change the fact that $\mathcal{C}$ is an obstruction in the new $G(\Sigma)$. This algorithm terminates as the number of vertices in $G(\Sigma)$ is always decreasing.
}

We now turn to Algorithm \ref{alg:2}, used as subroutine in the main algorithm. 
 Its correctness again easily follows from   Lemma \ref{correctness}.

\vspace{0.5cm}
\begin{algorithm}[H]
  \SetAlgoLined
  \KwData{$G(\Sigma)$ and an \outr vertex $x$.}
  \KwResult{Either an obstruction containing $x$ or that no such obstruction exists. }

  \Repeat{$V(G(\Sigma))=\{x\}$}{
  	\If{there is no cycle containing $x$}
    {\Return $G(\Sigma)$ has no obstruction containing $x$;
            	}
\septtwotwo{Find an outer cycle $C$ containing $x$}\;

        \eIf{\septtwotwo{$C$ has an \outr  $y\neq x$}}
        { Run Algorithm \ref{alg:1} on  $(G(\Sigma),x,y)$\;
        	\If{$G(\Sigma)$ has an obstruction $D$ including 						$x$ and $y$,
            	}
            	{
            		\Return $D$;}
        
       \septtwotwo{ $\Sigma\longleftarrow  \Sigma-y$;} }
        {\Return $C$;}

  }
   \Return{ $G(\Sigma)$ has no obstruction containing $x$.}
   
  \caption{Finding obstructions through a  fixed outer-rainbow.}
  \label{alg:2}
\end{algorithm}
\vspace{0.5cm}

Finally we present the algorithm to find obstructions, whose correctness  also relies  on Lemma \ref{correctness}. 

\hspace{0.5cm}

\begin{algorithm}[H]
  \SetAlgoLined
  \KwData{$G(\Sigma)$.}
  \KwResult{Either finds an obstruction or that no such obstruction exists. }

  \Repeat{$G(\Sigma)=\emptyset$}{
  	\If{$G(\Sigma)$ has no cycles}
    {\Return $G(\Sigma)$ has no obstructions;
            	}
  
      \septtwotwo{	Find an outer cycle $C$\;
  }

           \If{ $C$ has no rainbows}{ \Return $C$;}
          	Pick a rainbow $x$ in $C$ ($x$ is \outr in $G(\Sigma)$)\;
			Run Algorithm \ref{alg:2} on $(G(\Sigma),x)$\;
            \If{ $G(\Sigma)$ has an obstruction $D$ including $x$}{
            \Return $D$; }
           \septtwotwo{ $\Sigma\longleftarrow \Sigma-x$;}

  }
  \Return{$G(\Sigma)$ has no obstructions.}

  \caption{Finding obstructions.}
   \label{alg:3}
\end{algorithm}
\vspace{0.5cm}


\section{Pseudolinear  drawings of $K_n$} \label{sec:Kn}

In this section we present a simple proof of \feba{a} characterization of pseudolinear drawings of complete graphs (Theorem \ref{thm:Kn}), equivalent to the ones given in \feba{ \cite{AHPSV15} and \cite{AMRS15}}.

\begin{theorem}\label{thm:Kn}
A good drawing of a complete graph is pseudolinear if and only if it does not include the $B$ configuration (see Figure \ref{B_and_W}).
\end{theorem}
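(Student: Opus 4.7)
The plan is to apply Theorem~\ref{MAIN} in both directions. For the ``only if'' direction, it suffices to show that $B$ itself contains an obstruction. In any drawing homeomorphic to $B$, label the four $K_4$-vertices $a,b,c,d$ so that the unique crossing $x$ is between the non-adjacent edges $ac$ and $bd$; by the defining property of $B$ we may assume that the outer face is the triangle bounded by the edge $ab$ together with the arcs $ax\subset ac$ and $xb\subset bd$. Let $C_0$ be the cycle in $G(\Sigma)$ consisting of $ab$, $ax$ and $xb$. Since the outer face is this triangle, $c$ and $d$ lie in the closed disk bounded by $C_0$, and hence all four edge-portions of $ac\cup bd$ at $x$ lie in that disk. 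Consequently $x$ is reflecting in $C_0$, the only rainbows of $C_0$ are $a$ and $b$, and $C_0$ is an obstruction. By Theorem~\ref{MAIN}, no pseudolinear drawing can contain $B$.

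For the ``if'' direction, suppose $D$ is a good drawing of $K_n$ that is not pseudolinear and, using Theorem~\ref{MAIN}, fix an obstruction $C$ of $G(\Sigma)$ with $|\delta(C)|$ as small as possible; goodness of $D$ yields $|\delta(C)|\geq 3$. The decisive feature of complete graphs is that every $K_n$-vertex on $C$ is automatically a rainbow of $C$: its incident edges in $G(\Sigma)$ are portions of pairwise distinct $K_n$-edges and hence belong to pairwise distinct strings. Therefore $V(C)$ contains at most two $K_n$-vertices and at least one vertex $x\in\delta(C)$ must be a reflecting crossing. A rotation analysis at $x$ shows that a crossing $x\in\delta(C)$ is reflecting in $C$ precisely when all four edge-portions at $x$ lie in the closed disk bounded by $C$; hence both $K_n$-edges $\sigma_1,\sigma_2$ meeting at $x$ extend into the interior of $C$ from $x$ on the side opposite to their $C$-edges.

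For each $i\in\{1,2\}$ let $e_i\subset\sigma_i$ be the $C$-edge at $x$ on $\sigma_i$ and apply Observation~\ref{obs:min_delta}: the component of $\sigma_i\setminus e_i$ containing $x$ meets $C$ only at $x$, and since it begins on an edge inside $C$ it remains strictly inside $C$ all the way to its $K_n$-endpoint $u_i$. Writing $\sigma_i=u_iv_i$, the four $K_n$-vertices $u_1,u_2,v_1,v_2$ are the candidates for a $B$-subdrawing: by construction the edges $u_1v_1$ and $u_2v_2$ cross at $x$, the vertices $u_1$ and $u_2$ lie strictly in the interior of $C$, and the cycle structure at $x$ together with the position of $v_1,v_2$ on or outside $C$ will be shown to force the outer face of the induced $K_4$-subdrawing to contain $x$.

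The main obstacle is to verify that the induced $K_4$-subdrawing on $\{u_1,u_2,v_1,v_2\}$ is indeed homeomorphic to $B$ rather than to a drawing of $K_4$ with additional crossings; concretely, one must rule out crossings in the pairs $(u_1u_2,v_1v_2)$ and $(u_1v_2,u_2v_1)$. I anticipate handling this by a minimality argument in the spirit of Observation~\ref{obs:min_delta}: any such additional crossing would produce a new cycle in $G(\Sigma)$ that is itself an obstruction with strictly smaller $|\delta|$, contradicting the choice of $C$. Carrying out this reduction, together with the geometric verification that $x$ is placed on the outer face of the induced $K_4$, will be the technical core of the converse direction.
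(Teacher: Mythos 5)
Your ``only if'' direction is fine and is exactly the paper's: the triangle through the crossing of a $B$ configuration is an obstruction, so Theorem~\ref{MAIN} applies. Your setup for the converse also starts as the paper does (an obstruction $C$ minimizing $|\delta(C)|$, the remark that $K_n$-vertices on $C$ are rainbows, and Observation~\ref{obs:min_delta} to show that at a reflecting crossing $x$ the two free branches of $\sigma_1=u_1v_1$ and $\sigma_2=u_2v_2$ stay in the interior of $C$). But from there your argument has a genuine gap: the entire converse rests on the claim that the $K_4$ induced by $\{u_1,u_2,v_1,v_2\}$ is the non-convex crossing $K_4$, i.e.\ that $x$ lies on its outer face, and you explicitly leave this unproved. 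Nothing in your setup supports it. Observation~\ref{obs:min_delta} controls only the two branches of $\sigma_1,\sigma_2$ that enter the interior of $C$ at $x$; it says nothing about where $v_1$ and $v_2$ end up (they need not lie on or outside $C$, since $\sigma_1,\sigma_2$ may leave $C$ again), and it says nothing at all about the four remaining $K_4$-edges $u_1u_2,\,u_1v_2,\,u_2v_1,\,v_1v_2$, which need not even meet $C$. So the ``minimality argument in the spirit of Observation~\ref{obs:min_delta}'' you propose has no leverage: the quadrilateral formed by those four edges can perfectly well enclose $x$, in which case this $K_4$ contains no $B$ at all. (Incidentally, the obstacle you do flag, extra crossings among the other pairs of $K_4$-edges, is not the issue: in a good drawing every $K_4$ has at most one crossing, so goodness disposes of it.)

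The paper's proof shows why a purely local argument at $x$ is not enough: the $B$ configuration it produces is in general not located at the chosen crossing of $C$. The paper argues by induction on $n$: it classifies the $K_n$-vertices as internal or external to $C$ (via Observation~\ref{obs:min_delta} applied at reflecting and non-reflecting points of $\delta(C)$), deletes any vertex that is neither and applies induction (the obstruction survives); it shows that if the outer face of $D$ is incident with a crossing, then a $B$ is found directly there (Claim~\ref{claim:out_cycle}); and otherwise it shows the outer cycle of $D$ has at most four vertices, builds an auxiliary obstruction $C'$ through an outer edge $uv$, and again deletes a third outer vertex and applies induction. If you want to complete your route, you must either prove your $K_4$ claim (which I believe is false in general) or replace it by a mechanism, like the paper's vertex-deletion induction, that allows the $B$ to be found elsewhere in the drawing.
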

\begin{proof}
\setcounter{claim}{0}

The unique cycle in a $B$ configuration is an obstruction,  so, by Theorem \ref{MAIN},  no pseudolinear drawing of $K_n$ can include it. Conversely, suppose that $D$ is a good drawing of $K_n$ that is not pseudolinear. 
Let $\Sigma=\{D[e]\;:\; e\in E(K_n)\}$ be the set of edge-arcs, and let $G(\Sigma)$ be its underlying plane graph. In order to avoid confusion between vertices and edges of $K_n$ and   $G(\Sigma)$,   vertices in  $G(\Sigma)$ are called {\em points}, and edges of $G(\Sigma)$ are {\em segments}. Because $D$ is good, each point is either in $V(K_n)$ or  a crossing. 

 For every cycle $C$ in $G(\Sigma)$, we let $\delta(C)$  be the set
 \junefif{of}
 points in $C$ for which their two incident segments in $D$ belong to distinct edges in $\Sigma$. Theorem \ref{MAIN} implies that $G(\Sigma)$ has an obstruction $C$. We choose our obstruction  $C$ so that $|\delta(C)|$ is  as small as possible.


Since $D$ is good, $|\delta(C)|\geq 3$ and, because $C$ is an obstruction, at most two vertices in $\delta(C)$ are rainbows in $C$. Consider a point $x\in \delta(C)$ \ther{that is} reflecting inside $C$. Note that $x$ is a crossing. Let $\sigma_1$ and $\sigma_2$ be the two edge-arcs in $\Sigma$ crossed at $x$. We traverse $\sigma_1$, starting at $x$, continuing on the segment of $\sigma_1$ included in the interior of $C$, until an end $a_1\in V(K_n)$ of $\sigma_1$ is reached. Likewise we define $a_2$ for $\sigma_2$. Henceforth, we refer to $a_1$ and $a_2$ as the {\em internal vertices} corresponding to the crossing $x$. The following claim explains  why we call them ``internal''. 

\begin{claim}\label{claim:internal}
Let $x\in \delta(C)$ be a point reflecting inside $C$. Then the two internal vertices corresponding to $x$ are in the \junefif{interior} of $C$. 
\end{claim}
\begin{proof}
Let \junefif{$a_1$}
be an internal vertex corresponding to $x$, and suppose 	
\junefif{$\sigma_1$} is the edge-arc including both $x$ and $a_1$. Let $\sigma_1'$ be the substring of $\sigma$, having $x$ and $a_1$ as endpoints. Applying Observation \ref{obs:min_delta} to our obstruction $C$, with $\sigma=\sigma_1$ and $\sigma'=\sigma_1'$, we obtain  that $\sigma_1'\cap C=\{x\}$. Since  points of $\sigma_1'$ near $x$ are in the interior face of $C$,   $\sigma_1'\setminus\{x\}$ is included \junefif{in the interior} of $C$. In particular,  $a_1$ is in such a face. 
\end{proof}

Now we look at the points in $\delta(C)$ that are not reflecting inside $C$. If $x$ is one of them, then $x$ is
a vertex or a crossing. Suppose that $x$ is a crossing. Let $\sigma_1$, $\sigma_2$ be the edge-arcs crossing at $x$. Because $x$ is not reflecting inside $C$, one of the two segments at $x$ included in $\sigma_1$ is in the outer face of $C$. We traverse $\sigma_1$, starting in $x$, and continuing in the outer face until we reach an end $b_1$ of $\sigma_1$. Likewise we define $b_2$ for $\sigma_2$. These vertices $b_1$, $b_2$ are the {\em external} vertices corresponding to the crossing $x$. 

\begin{claim}\label{claim:external} Let $x$ be a crossing in $\delta(C)$ that is not reflecting inside $C$, and let $\sigma$ be an edge-arc including $x$  and an external vertex $b$ of $x$. If $\sigma'$ is the substring of $\sigma$ connecting $x$ to $b$, then  $\sigma'\setminus \{x\}$ is included in the outer face of $C$. 
\end{claim}
\begin{proof}
Applying Observation \ref{obs:min_delta} to  $C$, $\sigma_1$,  and $\sigma'$, we see that $\sigma'\cap C=\{x\}$. Since the points of $\sigma_1'$ near $x$ are in the outer face of $C$,   $\sigma_1'\setminus\{x\}$ is included in the outer face of $C$. 
\end{proof}
\junefif{It is convenient, in  the case  when $x$ is a vertex of $K_n$, to let $x$  be its own external vertex.}

\junefif{Henceforth} we refer to the vertices of $K_n$ that are internal to some crossing in $C$ as the {\em internal vertices} of $C$, and likewise,  the {\em external vertices} of $C$ are the vertices of $K_n$ that are external to some crossing or to a vertex in $C$. 

\begin{claim}

Every segment in $C$ is included in an edge-arc whose ends are either internal or external vertices of $C$.
\end{claim}
\begin{proof}
Any segment $s$ of $C$ is contained in a subpath $P$ of $C$ whose ends are in $\delta(C)$ but is otherwise disjoint from $\delta(C)$.
 This path $P$ is part of an edge-arc $\sigma\in \Sigma$. Let $a\in V(K_n)$ be one of the ends of $\sigma$, and suppose that $x$ is the first end of $P$ that we encounter when we traverse $\sigma$ from $a$ to the other end of $\sigma$.
 If $\sigma$ is reflecting at $x$, then $a$ is internal. If $\sigma$ is not reflecting at $x$, then $a$ is external.  
    Likewise, the other end of $\sigma$ is internal or external.
\end{proof}

Suppose that  $K_n$ has a vertex $y$ that is \junefif{neither} external nor internal to $C$. Then, by our previous claim,  the underlying plane graph of $D[K_n-y]$ contains a cycle   whose drawing is $D[C]$ and  is an obstruction. Thus,   $D[K_n-y]$ is not pseudolinear,  
and   applying induction on $n$, we obtain that $D[K_n-y]$ has a $B$ configuration.  Henceforth we assume that  all the vertices of $K_n$ are either internal or external to $C$.

\begin{claim}\label{claim:out_cycle}
Either the outer face of $D$ is bounded by a cycle of $K_n$ or $D$ has a $B$ configuration.
\end{claim}
\begin{proof}
Suppose that the outer face of $D$ is not bounded by a cycle of $K_n$. Then the outer face is incident to a crossing $\times$ between two edge-arcs $\sigma_1$ and $\sigma_2$. Let $K$ be the crossing $K_4$ induced by the ends of $\sigma_1$ and $\sigma_2$. The drawing $D[K]$ has exactly five faces, four of them incident to $\times$. Exactly one of the faces incident to $\times$ includes the outer face of $D$. Such a face of $D[K]$ is bounded by portions of $\sigma_1$, $\sigma_2$, and an edge $e$ of $K_n$ connecting an end of $\sigma_1$ to an end of $\sigma_2$. The drawing induced by $\sigma_1$, $\sigma_2$ and $D[e]$ is a $B$ configuration.
\end{proof}

Claims \ref{claim:internal} and \ref{claim:out_cycle} imply that the outer cycle of $D$ consists of only external vertices of $C$. Every external vertex either is associated  with  a crossing  that is not reflecting inside $C$, or  is 
\junefif{itself}
a vertex of $K_n$ in $C$. Because $C$ has at most two points not reflecting inside $C$, and each of them has at most two external vertices,  there are at most four points in the outer cycle of $D$. Thus the outercycle is a 3- or 4-cycle of $K_n$. 

As $C$ has at least three external vertices (in the outer cycle),  $\delta(C)$ has precisely two  points $p$ and $q$ not reflecting inside $C$. The outer cycle of $D$ has an edge $uv$, where $u$ is  external to $p$ and $v$ is external to $q$ (possibly $u=p$ or $q=v$). 


Consider the $pq$-path $P$ in $G(\Sigma)$, starting at $p$, continuing on the edge-arc connecting $p$ to $u$, then following the edge $uv$ until we reach $v$, and ending by following the edge-arc connecting $v$ to $q$. We finish our proof by considering two cases, depending on whether $uv$ is a segment of $C$. 
\begin{case}
$uv$ is not a segment of $C$. 
\end{case}

In this case, there exists \marchtwo{a point} $w\in D[uv]\setminus D[C]$. As $D[uv]$ is part of the outer cycle, it   contains \junefif{neither} crossings nor vertices in its interior, so the arcs in $D[uv]$ connecting  $w$ to the ends $u$ and $v$ are internally disjoint from $C$. From Claim \ref{claim:external}, it follows that
the $pu$- and the $qv$-subpaths  of $P$ are internally disjoint from $C$. Thus $P$ is an arc connecting $p$ and $q$ in the outer face of $C$. 



Consider the cycle $C'$ obtained from the union of $P$ and the $pq$-path of $C$ that lies in the outer face of $D[C\cup P]$. 

We will show that $C'$ is an obstruction by showing that $u$ and $v$ are the only rainbows of  $C'$. 
If $p\neq u$, then the edge-arc $\sigma$ connecting $p$ and $u$ shows that every point in $(P-u)\cap\sigma$ is reflecting inside $C'$. Analogously, if $q\neq v$, the points
\junefif{distinct}
from $v$ in the edge-arc connecting $q$ and $v$, are reflecting inside $C'$.  
Thus the internal points in $P$, with the  exception of $u$ and $v$,  are  not reflecting.  
The same holds for the points in $C'-P$, as these points are not reflecting inside $C$ (recall that $p$ and $q$ are the only rainbows of $C$).  Thus $u$ and $v$ are the only rainbows  of $C'$. 

Note that all the segments  of $C'$ are included in edges whose ends are $u$, $v$ or interior points of $C$. So if $y$ is a vertex in the outercycle of $D$ distinct from $u$ and $v$,  $D[K_n -y]$ also includes  $D[C']$ as an obstruction, implying that $D[K_n-y]$ is not pseudolinear. Again, by induction on $n$, we obtain that $K_n-y$ has a $B$ configuration.

\begin{case}
$uv$ is a segment of $C$. 
\end{case}
In this case, as $u$, $v$ are vertices of $K_n$ in $C$, they are rainbows of $C$. Since $p$ and $q$ are the only rainbows,  $p=u$, $q=v$, and $D[uv]$ is a segment of $C$.  Then, all the segments  of \septtwotwo{$C$} are included in \junefif{edge-arcs} whose ends are $u$, $v$ or interior points of $C$. Again, remove a vertex in the outer cycle of $D$ distinct from $u$ and $v$ to obtain a non-pseudolinear drawing of $K_{n-1}$ in which, by induction, we find a $B$ configuration. 
\end{proof}

\section{Concluding remarks}\label{sec:conclusions}
In  our initial attempts to formulate Theorem \ref{MAIN}, we intended to characterize non-pseudolinear good drawings of graphs by means of having at least one of the configurations in Figure \ref{obstructions} as a subdrawing. We obtain this as an easy consequence of Theorem \ref{MAIN}. We sketch its proof.

\minconf*

\begin{proof}
Take $C$ an obstruction of the underlying plane graph associated to $D$. We choose $C$ so that \septtwotwo{$|\delta(C)|$} is as small as possible. Decompose $C$ into a cyclic sequence of paths $P_0,\ldots, P_m$, where $P_i$ connects two points in $\delta(C)$ and it is otherwise disjoint from $\delta(C)$. By using Observation \ref{obs:min_delta}, one can show that $P_0,\ldots, P_m$ belong to distinct edge-arcs $\sigma_0,\ldots,\sigma_m$, respectively. For each $P_i$, we consider the string $\sigma_i'$, obtained by slightly extending the ends of $P_i$ that are reflecting in $C$;  we extend  them along  $\sigma_i$. 

Let $x\in \delta(C)$ be an end shared by $P_{i-1}$ and $P_i$. If $x$ is  reflecting in $C$, then $x$ is a crossing between $\sigma_{i-1}$ and $\sigma_i$. Moreover, the arcs added to $P_{i-1}$ and $P_i$  at $x$ to obtain $\sigma_{i-1}'$ and $\sigma_i'$ are in the interior of $C$. If $x$ is a rainbow in $C$, then $P_i$ and $P_{i-1}$ are not extended at  $x$, and $x$ acts as one of the black dots in Figure \ref{obstructions}. The rest of the points in $\delta(C)$ are crossings in  $\bigcup_{i=0}^m\sigma_{i}'$ facing the interior of $C$. 
Since $C$ has at most two rainbows,    $\bigcup_{i=0}^m\sigma_{i}'$ is one in Figure \ref{obstructions}. 
\end{proof}

There are 
\junefif{pseudolinear}
drawings that are not stretchable. For instance, consider the Non-Pappus configuration in Figure \ref{nonpappus}. Nevertheless, as an immediate consequence of Thomassen's  main result in \cite{T88},  pseudolinear and stretchable drawings are equivalent, under the assumption that every edge is crossed at most once. 

\begin{figure}[ht]

    \centering
    
    \includegraphics[scale=0.4]{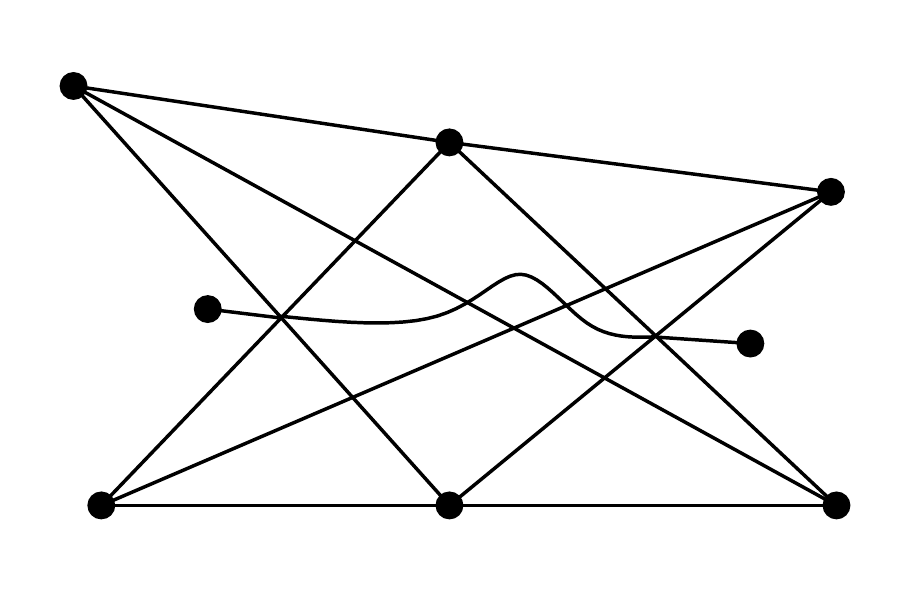}
    \caption{Non-Pappus configuration.}
 	\label{nonpappus}
\end{figure}

\begin{corollary}
A drawing of a graph in which every edge is crossed at most once is stretchable if and only it is pseudolinear. 
\end{corollary}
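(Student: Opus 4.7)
\bigskip

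\noindent\textbf{Proof plan.} The plan is to derive the corollary directly from Thomassen's theorem (stated in the introduction) together with Theorem \ref{MAIN} (or, equivalently, Theorem \ref{thm:min_conf}). The forward direction is essentially trivial: if $D$ is stretchable, then it is homeomorphic to a rectilinear drawing, and since straight lines are pseudolines, the line containing each edge serves as its pseudoline in an arrangement. Hence stretchable drawings are always pseudolinear, with no hypothesis on the number of crossings per edge needed.

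For the converse, suppose $D$ is a good drawing in which every edge is crossed at most once, and assume $D$ is pseudolinear. Thomassen's characterization asserts that, under the at-most-one-crossing hypothesis, $D$ is stretchable if and only if $D$ is good and contains neither the $B$ nor the $W$ configuration of Figure \ref{B_and_W}. So it suffices to rule out $B$ and $W$ as subdrawings of $D$.

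The key observation is that both $B$ and $W$ contain a unique cycle, and in each case that cycle has only two rainbow vertices (the two black dots in Figure \ref{B_and_W}); every other vertex on the cycle is a crossing and thus reflects inside the cycle. By the definition given in Section \ref{sec:main}, this cycle is an obstruction. Consequently, if $D$ contained a $B$ or $W$ configuration as a subdrawing, then the underlying plane graph $G(\Sigma)$ of the edge-arcs of $D$ would contain an obstruction (the same cycle, viewed inside $G(\Sigma)$, still has at most two rainbows since introducing extra crossings on the cycle only creates more reflecting vertices). This contradicts Theorem \ref{MAIN}, which guarantees that $G(\Sigma)$ has no obstruction when $D$ is pseudolinear.

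There is no real obstacle here; the corollary is a short deduction combining Theorem \ref{MAIN} with Thomassen's theorem. The only point to double-check is the routine fact that adding subdivision points (extra crossings) to a cycle with two rainbows keeps the number of rainbows at most two, so that the obstruction property survives when we pass from the subdrawing $B$ or $W$ to the cycle sitting inside $G(\Sigma)$.
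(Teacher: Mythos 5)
Your proposal is correct and follows essentially the same route as the paper: stretchable trivially implies pseudolinear, and for the converse you use Theorem \ref{MAIN} to conclude that a pseudolinear $D$ has no obstruction, hence contains neither the $B$ nor the $W$ configuration (whose cycles have at most two rainbows, even after accounting for extra crossings in $G(\Sigma)$), so Thomassen's characterization gives stretchability. The extra detail you supply about the obstruction surviving subdivision by additional crossings is a point the paper leaves implicit, but the argument is the same.
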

\begin{proof}
Let $D$ be a drawing of a graph in which every edge is crossed at most once. If $D$ is stretchable then clearly it is pseudolinear. To show the converse, suppose that $D$ is pseudolinear.  Then $D$ does not contain any obstruction, and in particular, \junefif{neither of the} $B$ and $W$ configurations in Figure \ref{B_and_W} occur in $D$. In \cite{T88}, it was shown that not containing the  $B$ and $W$ configurations is equivalent to  being rectilinear.  
\end{proof}

One can construct more general  examples of pseudolinear drawings that are not stretchable
by considering  non-strechable arrangements of pseudolines.  However,  such examples seem to inevitably have edges crossing several times. This leads to two natural questions.

\begin{question}\label{Q1}
Is it true that if $D$ is a pseudolinear drawing in which every edge is crossed at most twice, then $D$ is stretchable?
\end{question}
\begin{question}\label{Q2}
Is it true that if $D$ is a pseudolinear drawing in which all the crossings involve a fixed edge, then $D$ is stretchable?
\end{question}


\medskip
 
\bibliographystyle{abbrv}
\bibliography{Mendeley}

\end{document}